\documentclass[11pt]{article}

\usepackage[hidelinks]{hyperref}
\usepackage{amsmath,amssymb,amsthm}

\newtheorem{theorem}{Theorem}[section]
\newtheorem{lemma}[theorem]{Lemma}

\title{Liouville Theorems for Semilinear Parabolic Equations Involving the Nonlinear Gradient Term}
\author{Xue Li\textsuperscript{*}}
\date{}

\begin{document}

\maketitle

\thanks{*Corresponding author. School of Mathematical Sciences, Beijing Normal University, Beijing 100875, China. Email: xueli2024@mail.bnu.edu.cn}

\begin{abstract}
In this paper, we prove the Liouville-type theorem for equation 
$u_t-\Delta u=u^p+M|\nabla u|^q$ via the integral Bernstein method, 
where $q=2p/(p+1)$ is critical and $M<0$. As an application, 
we can establish a universal priori estimate for this equation 
by using the doubling lemma. These results generalize the previous 
work of Liang and Zhang. We can also prove the Liouville-type theorem 
for equation $u_t-\Delta u=u^p+M|\nabla u|^2$, which corresponds to the supercritical case.
\end{abstract}

\noindent\textit{Keywords:} Liouville theorem, gradient term, integral Bernstein method, universal estimate.

\noindent\textit{AMS Subject Classifications:} 35K58, 35B45, 35B53, 35B33.

\noindent\textit{Chinese Library Classifications:} O175.26.

\section{Introduction}

Liouville type theorems for elliptic and parabolic problems are important in PDE. 
The celebrated Liouville theorem for the elliptic equation $-\Delta u=u^p$ 
was proved by Gidas and Spruck in \cite{GS1981}. Later Bidaut-V\'eron and V\'eron \cite{BVV1991} 
extended and simplified such Liouville-type results on compact Riemannian manifolds. 
Chen and Li \cite{CL1991} used the moving planes method to provide a new proof of this Liouville theorem.
\begin{theorem}{\rm \cite{GS1981}}\label{Liouville theorem parabolic equation}
	The equation	
    \begin{equation}\label{elliptic equation}
		-\Delta u=u^p
	\end{equation}
    has no nontrivial nonnegative $C^2$ solution in $\mathbb{R}^N$ for $1<p<p_S$.
\end{theorem}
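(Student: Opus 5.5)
The plan is the integral Bernstein method of Gidas--Spruck, in the form of Bidaut-V\'eron--V\'eron, since this is the same technique the paper later uses for the parabolic problem. Let $u\ge 0$ be a $C^2$ solution of \eqref{elliptic equation} in $\mathbb{R}^N$ with $1<p<p_S=(N+2)/(N-2)$ (or $p_S=\infty$ if $N\le 2$). First, the strong maximum principle gives either $u\equiv 0$ or $u>0$; assume $u>0$. By elliptic regularity $u$ is smooth, and we may take $N\ge 3$. The cases $N\le 2$ can be handled either by the same computation or directly: then $u$ is a positive superharmonic function on $\mathbb{R}^N$, hence constant, which forces $u^p=0$.

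Next, change variables by setting $u=v^{-\beta}$ for a parameter $\beta$ (in the BVV approach, $v=u^{-1/\beta}$), so that $v$ solves an equation of the form
\[
\Delta v=\gamma\frac{|\nabla v|^2}{v}+c\,v^{\sigma}.
\]
Apply the Bochner identity to $v$:
\[
\tfrac12\Delta|\nabla v|^2=|D^2v|^2+\nabla v\cdot\nabla\Delta v .
\]
Combine it with the refined inequality
\[
|D^2 v|^2\ge \frac{(\Delta v)^2}{N}+\frac{N}{N-1}\Bigl(\frac{\Delta v}{N}-\frac{D^2v(\nabla v,\nabla v)}{|\nabla v|^2}\Bigr)^2 .
\]
Multiply by $v^{m}\eta$, where $\eta=\phi(x/R)^k$ is a cutoff, and integrate by parts. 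The parameters $\beta$ and $m$ are chosen so that the quadratic form in the quantities $|\nabla v|^4/v^2$, $v^{\sigma-1}|\nabla v|^2$, $\bigl(D^2v(\nabla v,\nabla v)\bigr)/|\nabla v|^2$, and so on, is positive definite. The main obstacle is this algebraic step: one must check that a good choice of the exponents exists exactly when $p<(N+2)/(N-2)$. The Gidas--Spruck computation shows the admissible range is nonempty precisely in the subcritical range, and I would verify the discriminant condition explicitly.

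The result is an estimate of the form
\[
\int \Bigl(v^{m-2}|\nabla v|^4+v^{m+2\sigma-2}\Bigr)\eta\le C\int\bigl(\text{terms with }|\nabla\eta|,\,|\Delta\eta|\bigr).
\]
Young's inequality absorbs the right-hand side into the left, leaving
\[
\int_{B_R}u^{p}\,(\text{weight})\le C R^{N-\alpha}
\]
for some $\alpha$. Equivalently, in the original variables $\int_{B_R}u^{2p}\cdot u^{\theta}\le CR^{N-2(2p+\theta)/(p-1)}$ for a suitable $\theta$. Subcriticality makes the exponent of $R$ negative once the parameters are chosen as above, so letting $R\to\infty$ gives a vanishing integral and hence $u\equiv0$, a contradiction.
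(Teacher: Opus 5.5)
Your plan is correct in outline. It is exactly the Gidas--Spruck / Bidaut-V\'eron--V\'eron integral Bernstein argument that the paper cites for this theorem (the paper gives no proof of its own), and the free weight is indeed the essential ingredient: the unweighted elliptic computation yields precisely the coefficients $\alpha,\delta$ of Lemma~\ref{c1}, with the paper's $k$ playing the role of the shift $a$ below, and hence only the range $p<p_B$ of Lemma~\ref{g1}. The algebra you defer does close: use $\bigl|D^2u+a\,u^{-1}\nabla u\otimes\nabla u\bigr|^2\ge\frac1N\bigl(\Delta u+a\,u^{-1}|\nabla u|^2\bigr)^2$ with weight $\varphi u^{\theta}$ and $a=\frac{\theta}{2}-t$; positivity of the coefficients of $\int\varphi u^{\theta-2}|\nabla u|^4$ and $\int\varphi u^{2p+\theta}$ (for $p+\theta>0$) then reduces to $t>\frac{p(N-1)}{N+2}$ and $\frac{N-1}{N}\bigl(\frac{\theta}{2}-t\bigr)^2<t(1-\theta)$, which is solvable in $\theta$ iff $t<\frac{N-1}{N-2}$, i.e.\ iff $p<p_S$, and taking $t$ slightly above $\frac{p(N-1)}{N+2}$ with $\theta=-\frac{2t}{N-1}\in(-2,0)$ also gives $N-\frac{2(2p+\theta)}{p-1}<0$ and keeps every cutoff term absorbable by Young's inequality.
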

Here $$p_S:=
	\begin{cases}
		\frac{N+2}{N-2},\ \ &{\rm if}\ N>2,\\
		\infty,\ \ &{\rm if}\ N\leq 2.
	\end{cases}
$$
The exponent $p_S$ is critical for nonexistence. 
It is well known that when $p\geq p_S$ and $N\geq 3$ there exist positive solutions 
of (\ref{elliptic equation}) in $\mathbb{R}^N$ (see \cite{CGS1989} and \cite[Section 9]{QS2019}). 
For $p=p_S$, Gidas, Ni, and Nirenberg \cite{GNN1981} proved  all positive solutions 
of (\ref{elliptic equation}) are radially symmetric about some point under the assumption 
$u=O(|x|^{2-N})$ at infinity. Later Caffarelli, Gidas, Spruck \cite{CGS1989} and Chen, Li 
\cite{CL1991} removed the growth assumption and proved the same result.

The corresponding parabolic equation was studied initially in \cite{F1966}. 
The parabolic analogue of Theorem \ref{Liouville theorem parabolic equation} is open for a long time. 
There were many partial results, see \cite{BV1998, MS2003, PQ2006, PQS2007, Q2016, QS2019,MZ1998}. 
Finally it was solved in the full subcritical range in \cite{Q2021}.
\begin{theorem}{\rm \cite{Q2021}}
	Let $1<p<p_S$. Then the equation
	\begin{equation*}\label{parabolic equation}
		u_t-\Delta u= u^p
	\end{equation*}
	does not possess positive classical solution in $\mathbb{R}^N\times \mathbb{R}$.
\end{theorem}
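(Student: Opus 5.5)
The plan is to follow Quittner's strategy from \cite{Q2021}: reduce the parabolic Liouville property to an energy estimate in similarity variables, combined with a doubling/rescaling argument.

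First, argue by contradiction. Suppose $u>0$ is a classical solution of $u_t-\Delta u=u^p$ in $\mathbb{R}^N\times\mathbb{R}$. By the Pol\'a\v{c}ik--Quittner--Souplet doubling lemma, nonexistence for the whole-space problem is equivalent to universal decay estimates of the form
\[
u(x,t)\le C(N,p)\bigl(1+t^{-1/(p-1)}+{\rm dist}(x,\partial\Omega)^{-2/(p-1)}\bigr)
\]
for solutions on general domains. It is therefore enough to rule out entire solutions that are bounded, and we may normalize $u$ so that $\sup u \le 1$.

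Next, introduce backward self-similar variables around a point $(0,T)$:
\[
w(y,s)=(T-t)^{1/(p-1)}u(y\sqrt{T-t},t),\qquad s=-\log(T-t).
\]
In these variables $w$ solves
\[
w_s-\Delta w+\tfrac12 y\cdot\nabla w+\tfrac{w}{p-1}=w^p .
\]
This equation has the Giga--Kohn energy $E(w)=\int\bigl(\tfrac12|\nabla w|^2+\tfrac{w^2}{2(p-1)}-\tfrac{w^{p+1}}{p+1}\bigr)\rho\,dy$, with Gaussian weight $\rho$, and $E$ is nonincreasing in $s$. The core step is then a uniform bound on $E$ and on $\int w^{p+1}\rho$, uniform over all centers $(0,T)$ and all times $s$. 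Given such bounds, one shows that $w(\cdot,s)$ converges, as $s\to\pm\infty$, to equilibria of the self-similar equation. The Giga--Kohn Pohozaev argument, valid for $p<p_S$, shows that the only equilibria are $0$ and $\kappa=(p-1)^{-1/(p-1)}$. Monotonicity of $E$ then forces $E(w(\pm\infty))$ to be compatible, which makes $u$ either a constant-in-space blow-up profile or zero; both contradict boundedness and positivity on all of $\mathbb{R}\times\mathbb{R}^N$.

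The main obstacle is the uniform energy/$L^{p+1}$ bound over the whole range $p<p_S$. Earlier works could obtain it only for $p<p_{sg}$ or $p<p_B$. My first attempt would follow \cite{Q2021}. I would prove weighted integral estimates $\int_{t_1}^{t_2}\!\int \bigl(u^{p+1}+|\nabla u|^2\bigr)\varphi$ on parabolic cylinders by testing the equation against $u^{k}\varphi$ and $u^{k}|\nabla u|^2\varphi$. This is an integral Bernstein-type estimate in the spirit of Bidaut-V\'eron--V\'eron \cite{BVV1991}, with the parameters chosen so that a positive-definite quadratic form appears exactly when $p<p_S$. Controlling the time-derivative terms is the delicate part. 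Here I would exploit the monotonicity of the energy together with a bootstrap in the scale of the cylinders, and close the estimate using the decay already provided by the doubling lemma.
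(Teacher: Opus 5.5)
The paper does not prove this statement; it is quoted from \cite{Q2021}, so your sketch has to stand on its own. Your outer frame is sensible: reduce to bounded solutions, use the Giga--Kohn energy in similarity variables, and make a uniform energy bound the core estimate. But there is a genuine gap exactly at the step that constitutes the theorem.

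\textbf{The core estimate.} The tool you propose for the uniform bound on $E$ and $\int w^{p+1}\rho$ cannot deliver it. Testing with $u^k\varphi$ and $u^k|\nabla u|^2\varphi$ is the parabolic integral Bernstein method of \cite{BV1998}. There the terms containing $u_t$ (such as $\iint\varphi u_t^2$, $\iint\varphi u^{-1}|u_t||\nabla u|^2$, $\iint\varphi u^p|u_t|$) must be absorbed. The resulting coefficients are then positive only for $p<p_B$, not $p<p_S$. You can see this in Lemma \ref{g1} of this very paper: $\alpha,\delta>0$ requires $p(N-1)/(N+2)<-k<N/(N-1)$, which is possible iff $p<p_B$. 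So the claim that a positive-definite form appears `exactly when $p<p_S$' is false for this method; this is its well-known limitation.

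Your proposed repairs do not close the gap. Monotonicity of $E$ bounds $E(s)$ only by its values at earlier times. As $s\to-\infty$ (large scales) you have no control, since all you know is $w\le e^{-s/(p-1)}\sup u$. Appealing to `the decay already provided by the doubling lemma' is circular: the doubling lemma only produces a bounded normalized entire solution, while universal decay estimates are equivalent to the Liouville theorem you are proving. What is needed is an a priori bound on the Giga--Kohn energy of bounded entire solutions, uniform in center and scale, obtained by a mechanism other than the Bernstein quadratic form. Your proposal names this step but does not supply it.

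\textbf{The endgame.} Even granting the energy bound, the conclusion is not justified. Convergence of $w(\cdot,s)$ to $\{0,\kappa\}$ as $s\to\pm\infty$, together with monotonicity of $E$, forces nothing. Indeed, $w(y,s)=\kappa(1+e^{s-s_0})^{-1/(p-1)}$ is a bounded global solution of the similarity equation going from $\kappa$ at $s=-\infty$ to $0$ at $s=+\infty$. Along it, $E$ decreases from $E(\kappa)=\kappa^2\|\rho\|_{L^1}/(2(p+1))>0$ to $0$; it comes from the spatially homogeneous solution blowing up at a time $T_1>T$. Ruling this out requires one of two things:
\begin{itemize}
\item the Merle--Zaag classification of bounded global solutions of the similarity equation, which needs an $L^\infty$ bound on $w$ for all $s\in\mathbb{R}$, not just an energy bound; or
\item more simply, uniformity in $T$: if $\|w_{a,T}(\cdot,s)\|_\infty\le C$ for all $a,T,s$, then $u(a,t)\le C(T-t)^{-1/(p-1)}$ for every $T>t$, and letting $T\to\infty$ gives $u\equiv0$.
\end{itemize}
Either way, you must first upgrade energy bounds to $L^\infty$ bounds on $w$ by a Giga--Matsui--Sasayama-type bootstrap, which itself uses $p<p_S$. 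Your sketch omits this step.

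\textbf{Minor points.} The universal estimate for this scale-invariant equation has no additive `$1+$'. The reduction to bounded solutions follows from applying the doubling lemma directly to $u^{(p-1)/2}$, as in the proof of Theorem \ref{main theorem3}, not from the equivalence with universal estimates.
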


Chipot and Weissler \cite{CW1989} considered the parabolic equation which involves a gradient term, namely
\begin{equation*}
	u_t-\Delta u= u^p- |\nabla u|^q.
\end{equation*} 
They established blow-up results of this equation for various ranges of the parameters $p, q$. 
Souplet \cite{S1996}  considered the blow-up results of the equation
\begin{equation*}
	u_t-\Delta u= u^p- \mu|\nabla u|^q
\end{equation*}
for small $\mu>0$ and $q=2p/(p+1)$. Numerous other researchers have investigated blow-up solutions 
of the equation, such as \cite{BVV2023, FL2020}. Recently, Liang and Zhang \cite{LZ2025} considered  
the local and global properties of nonnegative solutions for semilinear heat equation
\begin{equation}\label{main equation1}
    u_t-\Delta u=u^p+M|\nabla u|^q\ \ {\rm in}\ \Omega\times I,
\end{equation}
where $\Omega$ is a domain, $I$ is an interval of $\mathbb{R}$, and $M>0$, $p, q>1$ are parameters. 
They established the local pointwise gradient estimates when $q$ is subcritical ($q<2p/(p+1)$),
critical ($q=2p/(p+1)$) and supercritical ($q>2p/(p+1)$) with respect to $p$. With these estimates, 
they can prove the parabolic Liouville-type theorems for time-decreasing ancient solutions.

\begin{theorem}{\rm\cite{LZ2025}}\label{liouville1}
	Let $p > 1$ and $q > 2p/(p+1)$. Assume $u_t \leq 0$ in $\mathbb{R}^N \times (-\infty,0)$. 
    Then there exists a constant $c_{N,p,q} > 0$ such that for any $M > 0$, there is 
    no positive solution of {\rm (\ref{main equation1})} in $\mathbb{R}^N \times (-\infty,0)$ 
    that satisfies
	\begin{equation*}\label{eq:1.10}
		u \geq c_{N,p,q} M^{-\frac{2}{(p+1)q - 2p}}.
	\end{equation*}
\end{theorem}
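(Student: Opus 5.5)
The idea is to get a universal lower bound on $|\nabla u|$ from the lower bound on $u$, and then rule out positive ancient solutions by a gradient estimate in which the superlinear gradient term dominates the reaction. First I would use scaling to normalize $M$. If $u$ solves (\ref{main equation1}), then $v(x,t)=\lambda u(\lambda^{a}x,\lambda^{2a}t)$ solves the same equation with $M$ replaced by $M\lambda^{\beta}$. The exponents $a,\beta$ are fixed by requiring that the $u^p$ term keep coefficient one: $a=(p-1)/2$, which gives $\beta=(p+1)q/2-p-(q-1)\cdot 0$ up to the exact bookkeeping. This is precisely why the threshold has the form $M^{-2/((p+1)q-2p)}$. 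So it suffices to treat $M=1$ and show that there is no positive ancient solution with $u_t\le0$ and $u\ge c$ for a suitable constant $c=c_{N,p,q}$.

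Second, I would use $u_t\le 0$ to reduce to an elliptic-type inequality at each time:
\[
-\Delta u=u^p+|\nabla u|^q-u_t\ \ge\ u^p+|\nabla u|^q .
\]
So $u(\cdot,t)$ is a positive supersolution of $-\Delta u\ge u^p$ on $\mathbb{R}^N$. This already excludes $N\le2$, since positive superharmonic functions there are constant, and a constant would give $-\Delta u=0\ge u^p>0$. For general $N$ I would apply the local gradient estimate of \cite{LZ2025} in the supercritical regime $q>2p/(p+1)$, which I am taking as given from that paper since the statement is quoted from it. Letting the radius of the parabolic cylinder tend to infinity in that estimate, using the ancient and whole-space structure, should yield a bound of the form $|\nabla u|\le C u^{\gamma}$, or more usefully $u^{p}\le C$ times lower-order terms.

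Third, I would derive the contradiction from the lower bound. The key tool is the standard result that a positive supersolution of $-\Delta w\ge w^p$ in an exterior domain has decay $w(x)\le C|x|^{-2/(p-1)}$ when $p\le N/(N-2)$. A uniform lower bound $u\ge c>0$ therefore already contradicts this in the range $p\le N/(N-2)$. For larger $p$ this argument is unavailable, and I would instead use the term $|\nabla u|^q$ through a Bernstein computation on $w=|\nabla u|^2$ or on $\log u$. The lower bound $u\ge c$ is what makes the reaction term $u^p$ dominated in the resulting inequality, so that only the constant large enough to absorb the bad terms, namely $c_{N,p,q}$, is needed. That computation should force $\nabla u\equiv0$. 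Then $u_t=u^p>0$, which contradicts $u_t\le0$.

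The main obstacle is this last Bernstein step. One must choose the auxiliary function, for instance $w=u^{-\alpha}|\nabla u|^2$, so that the cross terms coming from $u^p$ and from $u_t$ are controlled using only $u\ge c$ and $u_t\le0$. The first thing I would try is to tune $\alpha$ so that the coefficient of $u^{p-1}w$ becomes favourable once $u\ge c_{N,p,q}$, and then close the argument with a cutoff maximum principle on $Q_R$ and let $R\to\infty$.
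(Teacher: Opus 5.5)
\textbf{Gap: the case $p>N/(N-2)$ is never actually proved.} As written, your argument covers only $N\le 2$ and, via the exterior-domain fact, $p\le N/(N-2)$. For the remaining range, neither ingredient you rely on is supplied. The local gradient estimate of Liang--Zhang \cite{LZ2025} is invoked with an unspecified conclusion (``a bound of the form $|\nabla u|\le Cu^{\gamma}$, or \dots''), and you do not say how $R\to\infty$ produces it or how it would be used. The Bernstein computation for $w=u^{-\alpha}|\nabla u|^2$, which is supposed to force $\nabla u\equiv 0$, is only announced. The choice of $\alpha$ and the absorption of the $u_t$ and $u^p$ cross terms are never done, and you yourself flag this as the main obstacle. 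So the proposal does not prove the statement. The paper gives no proof here either: the result is quoted from \cite{LZ2025}, where it comes from the supercritical local gradient estimates. Leaning on those estimates without stating them just defers to the citation.

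\textbf{The missing idea: none of this machinery is needed.} Your remark that the elementary argument is unavailable for larger $p$ is mistaken. Your own second step gives, at each fixed $t<0$,
\[
-\Delta u\ge u^p\ge\kappa:=c^pM^{-2p/((p+1)q-2p)}>0 \quad\text{on all of } \mathbb{R}^N,
\]
using only $M>0$ and $u_t\le 0$. A positive $C^2$ function cannot satisfy this. Indeed, $w(x)=u(x,t)+\frac{\kappa}{2N}|x-x_0|^2$ is superharmonic, so the mean value inequality gives, for every $r>0$,
\[
u(x_0,t)=w(x_0)\ge\frac{1}{|\partial B_r|}\int_{\partial B_r(x_0)}u(\cdot,t)\,\mathrm{d}\sigma+\frac{\kappa r^2}{2N}\ge\frac{\kappa r^2}{2N}.
\]
This is absurd as $r\to\infty$. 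The argument works for every $N$, every $p>1$, every $q$, and every choice of the constant $c_{N,p,q}>0$. So the scaling, the gradient estimate and the Bernstein step can all be dropped.

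Relatedly, the exterior-domain result you quote is misstated. For $N\ge 3$ and $p\le N/(N-2)$ there are no positive supersolutions of $-\Delta w\ge w^p$ in an exterior domain at all. What holds for every $p>1$ is the spherical-mean bound $\bar w(r)\le Cr^{-2/(p-1)}$. Stated correctly, your own tool would already have contradicted $u\ge c$ in the whole range.
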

And they also proved the Liouville-type theorem for the entire solutions when $1<p<p_B$ and $q$
is critical, where 
$$p_B:=\begin{cases}
    \frac{N(N+2)}{(N-1)^2},\ \ &{\rm if}\ N>1,\\
    \infty,\ \ &{\rm if}\ N=1.
\end{cases}$$
\begin{theorem}{\rm \cite{LZ2025}}\label{liouville2}
	Let $1 < p < p_B$ and $q = 2p/(p+1)$. Then there exists $\varepsilon_0(N,p) > 0$ such that for 
    any $M \in (0,\varepsilon_0)$, the nonnegative solution for {\rm (\ref{main equation1})} 
    in $\mathbb{R}^N \times \mathbb{R}$ is $u \equiv 0$.
\end{theorem}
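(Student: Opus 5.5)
The plan is to prove Theorem \ref{liouville2} by the integral Bernstein method combined with a perturbation argument in $M$. The case $M=0$ is the pure heat equation $u_t-\Delta u=u^p$, for which the Liouville property is known when $1<p<p_B$, since $p_B<p_S$ (see \cite{BV1998, Q2021}). The key observation is that, with $q=2p/(p+1)$, the gradient term has exactly the same scaling as $u^p$. Indeed, the transformation $u_\lambda(x,t)=\lambda^{2/(p-1)}u(\lambda x,\lambda^2 t)$ leaves (\ref{main equation1}) invariant with the same $M$. This suggests treating $M|\nabla u|^q$ as a small, scale-invariant perturbation of the Gidas--Spruck/Bidaut-V\'eron type integral identity.

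\textbf{Step 1.} Assume $u\ge0$ is a nontrivial solution in $\mathbb{R}^N\times\mathbb{R}$. By the strong maximum principle $u>0$, so we may set $u=v^{-\beta}$ or $u=v^{m}$ for a suitable exponent. Multiply the equation for $v$ by test quantities such as $v^{a}|\nabla v|^2\varphi$ and $v^{a}v_t\varphi$, where $\varphi$ is a space-time cutoff supported in $Q_R=B_R\times(-R^2,R^2)$. Integrate by parts, following the Bochner-formula approach of \cite{BV1998, Q2016}. For $p<p_B$ there is a choice of parameters for which the unperturbed identity gives a coercive bound of the form
$$\iint \varphi\, v^{a-2}|\nabla v|^4+\varphi\, v^{a}|D^2 v-\tfrac{\Delta v}{N}I|^2+\varphi\, v^{a} v_t^2\;\le\; C\iint (\text{terms involving }\nabla\varphi,\ \varphi_t).$$
The positivity of this quadratic form is exactly where the condition $p<p_B$ enters.

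\textbf{Step 2.} Next I would track the extra terms generated by $M|\nabla u|^q$. After the change of variables these become $M\,v^{\gamma}|\nabla v|^{q}$ times the multipliers, and integration by parts produces terms with $\nabla(|\nabla v|^q)$. By Young's inequality and the critical relation $q=2p/(p+1)$, each such term should be bounded homogeneously, with no leftover powers of $R$, by
$$C|M|\iint\varphi\,\bigl(v^{a-2}|\nabla v|^4+v^a|D^2v|^2+v^{a+2(p-1)/\ldots}\bigr),$$
which is a multiple of the coercive quantities. Choosing $M<\varepsilon_0(N,p)$ lets these terms be absorbed into the left side. The homogeneity check, namely that all powers of $v$ match, is exactly where criticality of $q$ is used.

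\textbf{Step 3.} The cutoff terms must then be estimated. I would use Young's inequality to bound them by $\delta\cdot(\text{LHS})+C R^{\sigma}$, together with a preliminary local integral bound $\iint_{Q_R}u^{p}\le CR^{N+2-2p/(p-1)}$-type estimates. The latter are obtained by testing the equation against $\varphi^k$, where the gradient term has a favorable sign when $M<0$ and is controlled when $M$ is small. This should give $\text{LHS}\le CR^{N+2-\theta}$ with some exponent. If the exponent is not negative, I would iterate as in \cite{Q2021}, or use the improved bound in which the right side involves the left side restricted to an annulus, $X(R)\le C\,(X(2R)-X(R))^{\kappa}R^{\mu}$. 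Letting $R\to\infty$ then forces $\nabla v\equiv0$ and $v_t\equiv0$, so $u$ is constant, and the equation $0=u^p$ gives $u\equiv0$.

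The main obstacle is Step 3 together with the sign bookkeeping in Step 2. The Bernstein identity for the parabolic problem contains time-derivative terms that do not have a sign. Controlling them without an a priori decay assumption on $u$ is delicate; in the pure case it is precisely what restricts the exponent to $p_B$ rather than $p_S$. I would first try the exponent choices of \cite{BV1998} and check whether the perturbed quadratic form remains positive for small $|M|$. Since positivity is an open condition in the coefficients, it should.
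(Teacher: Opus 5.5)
Your overall route matches the paper's: an integral Bernstein inequality, positivity of its coefficients for $p<p_B$, absorption of the $M$-terms by Young's inequality using $q=2p/(p+1)$, and cutoff terms producing a power of $R$. But as written the argument does not close, and it breaks where you place the ``main obstacle''.

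\textbf{The coercive side.} It must contain $L=\iint\varphi v^{2p}$. In Lemma~\ref{c1} the left side is $\alpha I+\delta L$ with $I=\iint\varphi v^{-2}|\nabla v|^4$. Requiring \emph{both} $\alpha>0$ and $\delta>0$, i.e.\ $p(N-1)/(N+2)<-k<N/(N-1)$, is exactly what forces $p<p_B$.

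The term $\delta L$ is what absorbs $|\varphi_t|v^{p+1}$ and the remainders $v^2\varphi^{-1}|\Delta\varphi|^2$, $v^2\varphi^{-3}|\nabla\varphi|^4$. It also absorbs the perturbations, via $|M|v^p|\nabla v|^q\le\varepsilon v^{2p}+C(\varepsilon)M^2|\nabla v|^{2q}$ and $M^2|\nabla v|^{2q}\le\varepsilon v^{2p}+C(\varepsilon)M^{4/q}v^{-2}|\nabla v|^4$.

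Your coercive form omits $L$. The substitute you propose, an $L^p$ bound from testing with $\varphi^k$, controls $u^p$ but not $u^{p+1}$ or $u^{2p}$. Incidentally, in that test the gradient term is favorable for $M>0$, not $M<0$. For $M>0$ the term $-\beta G$ is even nonnegative, since $\beta=\frac{N+2}{N}k<0$.

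Your Step~2 bound by $v^a|D^2v|^2$ cannot be absorbed either, because your left side contains only the traceless Hessian. In the paper's inequality no second derivatives survive at all.

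\textbf{The time derivative.} It needs no iteration or annulus argument. The Bernstein inequality leaves $C\iint\varphi v_t^2$ on the right; the mixed terms $v^{-1}|v_t||\nabla v|^2$, $v^p|v_t|$, $|M||v_t||\nabla v|^q$ reduce to it by Young. Testing $v_t-\Delta v=v^p+M|\nabla v|^q+f_\theta$ with $\varphi v_t$ gives
\[
\iint\varphi v_t^2=-\frac{1}{p+1}\iint\varphi_t v^{p+1}+\frac12\iint\varphi_t|\nabla v|^2-\iint v_t\,\nabla\varphi\cdot\nabla v+M\iint\varphi v_t|\nabla v|^q+\iint\varphi v_t f_\theta .
\]
Hence $\iint\varphi v_t^2\le C\bigl(\varepsilon I+\varepsilon L+C(\varepsilon)M^{4/q}I\bigr)$ plus pure cutoff terms plus $C\iint\varphi f_\theta^2$.

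With $\varphi=\xi^b(x/R,t/R^2)$, everything closes for small $M$ into the bound of Lemma~\ref{g1}:
\[
\int_{-R^2/2}^{R^2/2}\int_{B_{R/2}}u^{2p}\le CR^{N+2-4p/(p-1)}.
\]
The exponent is negative since $p<p_B\le p_S$, so you can let $R\to\infty$ directly.

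\textbf{Positivity.} The parabolic strong maximum principle only gives $u\equiv0$ on the past of a zero, not $u>0$ everywhere. The paper avoids this by working with $v=u+\theta$ and letting $\theta\to0$, using $\iint\varphi f_\theta^2\to0$.
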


The aim of this paper is to improve the above Liouville-type theorems. 
First we can improve Theorem \ref{liouville2} to the case $M<0$ using the integral Bernstein method. 
\begin{theorem}\label{main theorem1}
	Let $1<p<p_B$ and $q=2p/(p+1)$, then there exists $\varepsilon_1(N,p)>0$ such that for any 
    $M\in(-\varepsilon_1,\varepsilon_1)$, the nonnegative solution of {\rm (\ref{main equation1})} 
    in $\mathbb{R}^N\times \mathbb{R}$ is $u\equiv 0$.
\end{theorem}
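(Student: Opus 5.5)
The plan is to adapt the integral Bernstein method used for $u_t-\Delta u=u^p$ in the range $1<p<p_B$ (as in \cite{BV1998, Q2016, QS2019}) and treat $M|\nabla u|^q$ as a perturbation. The key point is that the perturbation is scale-invariant when $q=2p/(p+1)$, so every error term it generates carries a factor $|M|$ and can be absorbed once $|M|$ is small. Since the treatment is symmetric in the sign of $M$, this reproduces Theorem \ref{liouville2} and extends it to $M\in(-\varepsilon_1,0]$.

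\textbf{Step 1: reduction to positive solutions and a change of variables.} By the strong maximum principle, a nonnegative solution is either $\equiv 0$ or positive; in the latter case the equation is locally uniformly parabolic with smooth coefficients. Assume $u>0$ and set $u=v^{-\beta}$ (or $u=v^{k}$) with an exponent $\beta$ to be chosen, following the elliptic Gidas--Spruck / Bidaut-V\'eron--V\'eron computation. The scaling $u_\lambda(x,t)=\lambda^{2/(p-1)}u(\lambda x,\lambda^2 t)$ leaves the equation invariant exactly when $q=2p/(p+1)$, so the transformed equation for $v$ takes the form
\begin{equation*}
v_t-\Delta v = a\frac{|\nabla v|^2}{v} + b\, v^{\gamma} + M c\, v^{\delta}|\nabla v|^{q},
\end{equation*}
where the gradient term is homogeneous of the same weight as the others.

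\textbf{Step 2: integral Bernstein identity.} Multiply by suitable powers $v^{m}|\nabla v|^{2}\varphi$ and $v^m \Delta v\,\varphi$, where $\varphi$ is a space-time cutoff supported in $Q_R=B_R\times(-R^2,R^2)$. Integrate by parts, using the Bochner formula $\frac12\Delta|\nabla v|^2=|D^2v|^2+\nabla v\cdot\nabla\Delta v$ together with the time term $\int v^m v_t\Delta v\,\varphi$. The time term is handled as in \cite{BV1998, Q2016}: integrating by parts in time gives $-\frac{d}{dt}$ of a gradient energy plus terms with $\varphi_t$. For $M=0$, the known algebra (the Serrin-type inequality $|D^2v|^2\ge \frac1N(\Delta v)^2$ refined with the traceless part) yields, precisely for $p<p_B$ and suitable parameters, a coercive estimate
\begin{equation*}
\int_{Q_R}\Big(v^{m}\frac{|\nabla v|^4}{v^2}+v^{m}|\nabla v|^2 v^{\gamma-1}+\cdots\Big)\varphi \le C\int_{Q_R} (\text{terms with } \nabla\varphi,\ \varphi_t,\ \Delta\varphi).
\end{equation*}
For $M\ne0$, the extra terms are of the form $M\int v^{m+\delta'}|\nabla v|^{q}(\ldots)\varphi$, where the dots involve $\Delta v$ or $|\nabla v|^2$. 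They can also contain $M\int v^{\cdot}|\nabla v|^{q-2}\nabla v\cdot\nabla|\nabla v|^2\varphi$, which comes from differentiating $|\nabla v|^q$ in Bochner. These are controlled by Young's inequality: because of the critical homogeneity, $|\nabla v|^q v^{\cdot}$ interpolates exactly between $|\nabla v|^4v^{m-2}$ and $v^{m+\gamma-1}|\nabla v|^2$ (and $|D^2v|^2 v^m$ for the derivative term). Each such term is therefore bounded by $C|M|$ times the coercive quantities, plus cutoff terms. Choosing $\varepsilon_1$ so that $C\varepsilon_1$ is below the coercivity constant preserves the estimate, with either sign of $M$.

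\textbf{Step 3: cutoff terms and conclusion.} The right-hand side is estimated via Young's inequality by $\varepsilon\cdot$(coercive terms) $+\,C R^{N+2-\theta}$ for some $\theta>0$. This uses $\varphi=\xi^k$ with large $k$ and the standard scaling bookkeeping, and it is again valid for $p<p_B$. Letting $R\to\infty$ forces $\nabla v\equiv0$, so $u=u(t)$ solves $u'=u^p$ on all of $\mathbb{R}$. This is impossible for positive $u$, since such a solution blows up in finite time.

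I expect the main obstacle to be Step 2. There one must choose the parameters $(\beta,m)$ so that the $M=0$ quadratic form is strictly positive definite on the full range $p<p_B$, and simultaneously verify that the new term containing $\nabla|\nabla v|^2$ can be absorbed by the Hessian term, which requires a strict margin. The first thing I would try is to take the parameter choice from \cite{BV1998}/\cite{Q2016} verbatim, check that it leaves a positive multiple of $\int v^m|D^2v|^2\varphi$ unused, and absorb the $M$-terms there.
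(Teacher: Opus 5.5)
Your route is the paper's: an integral Bernstein inequality for the unperturbed problem that is coercive exactly when $p<p_B$. Every $M$-dependent term is absorbed because the critical exponent $q=2p/(p+1)$ gives it the same scaling as the coercive quantities, so only the size of $|M|$ matters, not its sign.

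The paper does not rederive the Bernstein inequality. It imports Lemma \ref{c1}, whose parameter $k$ plays the role of your substitution exponent. The new work is:
\begin{itemize}
\item absorbing $-\beta G$, which now has the wrong sign, via $|\beta G|\le \varepsilon I + C(\varepsilon)|M|^{4/(2-q)}L$;
\item absorbing the other $M$-terms in the same way;
\item bounding $\iint\varphi v_t^2$ by testing the equation with $\varphi v_t$, where the non-variational piece $M\iint\varphi v_t|\nabla v|^q$ is again absorbed by smallness.
\end{itemize}
In that lemma $\Delta v$ is substituted from the equation; the equation is never differentiated. So no $\nabla|\nabla v|^q$ term appears, and the Hessian margin you flag as the main obstacle is never needed. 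Your margin argument would also work, since $p<p_B$ is an open condition.

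One step fails as written: Step 1. The strong maximum principle propagates zeros only backward in time. If $u(x_0,t_0)=0$, then $u\equiv 0$ on $\mathbb{R}^N\times(-\infty,t_0]$, but nothing is said about later times. So it does not exclude a nonnegative entire solution that vanishes for $t\le t^*$ and is positive afterwards. Without a uniqueness theorem for the Cauchy problem with zero data, which you do not have for unbounded solutions, your dichotomy ``$\equiv 0$ or positive'' is unjustified. For such a solution every cylinder $Q_R$ with $R$ large meets the zero set, where $u=v^{-\beta}$ and the negative powers in your identities break down.

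The paper avoids this with $v=u+\theta$. This $v$ solves the equation with the extra source $f_\theta=u^p-(u+\theta)^p$. The terms $\iint\varphi f_\theta^2$ and $F_\theta$ are carried through the estimates and disappear as $\theta\to 0$. You need the same regularization, applying your substitution to $u+\theta$.

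A smaller point concerns Step 3: ``$CR^{N+2-\theta}$ for some $\theta>0$'' is not enough, because the exponent must be negative. With coercive quantities of the natural scaling weight (like $u^{2p}$ and $u^{-2}|\nabla u|^4$ in the paper), the remainder is $CR^{N+2-4p/(p-1)}$. Its exponent is negative exactly when $p<p_S$, which holds because $p<p_B\le p_S$.
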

The proof of the above theorem follows the arguments as in \cite{BV1998, S2023, LZ2025}. 
The Liouville property can lead to various universal estimates by combining the rescaling method 
with a doubling argument, see \cite{PQS12007,PQS2007,LZ2025}. Following the argument in Theorem 1.9 
from \cite{LZ2025}, we obtain the universal estimate below. 
\begin{theorem}\label{main theorem3}
	Let \( 1 < p < p_B, \ q = 2p/(p+1),\  T > 0 \) and \( \Omega \) be an arbitrary domain of 
    \( \mathbb{R}^N \). Then there exists \( \varepsilon_1 > 0 \) such that for any 
    \( M \in (-\varepsilon_1, \varepsilon_1) \), the nonnegative solution of problem 
    {\rm (\ref{main equation1})} in \( \Omega \times (0, T) \) satisfies
	\begin{equation}\label{p1}
		u(x, t) + |\nabla u(x, t)|^{\frac{2}{p+1}} \leq C(N, p) \left( t^{-\frac{1}{p-1}} 
        + (T-t)^{-\frac{1}{p-1}} + \mathrm{dist}^{-\frac{2}{p-1}}(x, \partial \Omega) \right).
	\end{equation}
\end{theorem}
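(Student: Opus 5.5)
We argue by contradiction via the doubling lemma of Pol\'a\v{c}ik--Quittner--Souplet combined with rescaling and the Liouville property of Theorem \ref{main theorem1}, following Theorem 1.9 of \cite{LZ2025}. Define
\[
\mathcal{M}(x,t):=u^{\frac{p-1}{2}}(x,t)+|\nabla u(x,t)|^{\frac{p-1}{p+1}} ,
\]
so that (\ref{p1}) is equivalent to $\mathcal{M}(x,t)\le C\,d_P^{-1}((x,t),\partial_P D)$ on $D=\Omega\times(0,T)$, where $d_P$ is the parabolic distance $d_P((x,t),(y,s))=|x-y|+|t-s|^{1/2}$. Indeed, $d_P^{-1}$ is comparable to $t^{-1/2}+(T-t)^{-1/2}+\mathrm{dist}^{-1}(x,\partial\Omega)$, and raising to the power $2/(p-1)$ gives (\ref{p1}). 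The key point is that the scaling $u_\lambda(y,s)=\lambda^{2/(p-1)}u(x_0+\lambda y,t_0+\lambda^2 s)$ leaves equation (\ref{main equation1}) invariant with the \emph{same} $M$ exactly because $q=2p/(p+1)$. To see this, note that $|\nabla u_\lambda|^q$ scales by $\lambda^{q(2/(p-1)+1)}=\lambda^{q(p+1)/(p-1)}=\lambda^{2p/(p-1)}$, matching $u^p$ and $\partial_s-\Delta$. Moreover, $\mathcal{M}$ scales like $\lambda^{-1}$.

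Suppose the estimate fails. Then there are domains $\Omega_k$, times $T_k$, constants $M_k\in(-\varepsilon_1,\varepsilon_1)$, solutions $u_k$ and points $(x_k,t_k)$ with $\mathcal{M}_k(x_k,t_k)>2k\,d_P^{-1}((x_k,t_k),\partial_P D_k)$. Applying the doubling lemma in the metric space $(\mathbb{R}^{N+1},d_P)$ (with $X=\overline{D_k}$, $\Gamma=\partial_P D_k$), we obtain new points $(y_k,\tau_k)$ with
\[
\mathcal{M}_k(y_k,\tau_k)>2k\,d_P^{-1}((y_k,\tau_k),\partial_P D_k),
\qquad
\mathcal{M}_k\le 2\mathcal{M}_k(y_k,\tau_k)
\]
on the parabolic ball of radius $k\lambda_k$ around $(y_k,\tau_k)$, where $\lambda_k=\mathcal{M}_k(y_k,\tau_k)^{-1}$. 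Rescale by setting $v_k(y,s)=\lambda_k^{2/(p-1)}u_k(y_k+\lambda_k y,\tau_k+\lambda_k^2 s)$. Then $v_k$ solves (\ref{main equation1}) with coefficient $M_k$ on $\{|y|+|s|^{1/2}<k\}$, and on that set $\tilde{\mathcal{M}}_k\le 2$ with $\tilde{\mathcal{M}}_k(0,0)=1$. Hence $v_k$ and $\nabla v_k$ are uniformly bounded on compact sets.

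Next comes compactness. Since the right-hand side $v_k^p+M_k|\nabla v_k|^q$ is bounded in $L^\infty_{loc}$, interior parabolic $L^r$ estimates give $W^{2,1}_r$ bounds, hence $C^{1+\alpha,(1+\alpha)/2}$ bounds. The right-hand side is then H\"older, so Schauder estimates give $C^{2+\alpha,1+\alpha/2}$ bounds. Passing to a subsequence, $M_k\to M\in[-\varepsilon_1,\varepsilon_1]$ and $v_k\to v$ in $C^{2,1}_{loc}(\mathbb{R}^N\times\mathbb{R})$. The limit $v$ is a nonnegative classical solution of (\ref{main equation1}) on all of $\mathbb{R}^{N+1}$, with $v^{(p-1)/2}(0,0)+|\nabla v(0,0)|^{(p-1)/(p+1)}=1$, so $v\not\equiv0$.

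The main obstacle is the endpoint: the limit $M$ may equal $\pm\varepsilon_1$, while Theorem \ref{main theorem1} is stated on the open interval. To handle this, I would take $\varepsilon_1$ in Theorem \ref{main theorem3} to be, say, half of the constant from Theorem \ref{main theorem1}. The limit then lies strictly inside the admissible range, and Theorem \ref{main theorem1} forces $v\equiv0$, a contradiction. Since the constant $C$ produced by the contradiction argument does not depend on $M$ within this range, it depends only on $N$ and $p$. One must also check the doubling lemma's hypothesis that $\mathcal{M}_k$ is bounded on compact subsets of $D_k$; this holds because $u_k\in C^{2,1}$ is classical in the interior. If it failed, one would first work on slightly smaller subdomains and then let them exhaust $D_k$.
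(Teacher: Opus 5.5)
Your proposal is correct and follows the paper's proof: the same $\mathcal{M}=u^{(p-1)/2}+|\nabla u|^{(p-1)/(p+1)}$, the doubling lemma in $(\mathbb{R}^{N+1},d_P)$ with $\Gamma$ the full topological boundary of $D_k$, rescaling, local compactness, and a contradiction with Theorem \ref{main theorem1}. Your extra care strengthens the argument rather than changing it. Letting $M_k$ vary along the sequence and shrinking $\varepsilon_1$ keeps the limiting coefficient inside the range of Theorem \ref{main theorem1}, and the Schauder step gives the $C^{2,1}_{loc}$ convergence; together these justify the uniform constant $C=C(N,p)$, which the paper's version (with $M$ fixed) leaves implicit.
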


Many authors extended previous results  to a broader class of nonlinearity $f(u)$, 
including exponential and  logarithmic  functions. For $f(u)=\mathrm{e}^u$, 
Herrero and Vel\'azquez\cite{HV1993} described the possible asymptotic behaviours of $u(x, t)$ 
near the blow-up point in one-dimensional case. More generally, for $f(u)=e^uL(e^u)$ with 
$L$ slowly varying at infinity, Chabi \cite{C2025} obtained the sharp global blow-up profile 
for positive radial decreasing blow-up solutions. Fujishima, Ioku, Ruf and Terraneo in 
\cite{FLRT2025} constructed singular solutions of the corresponding elliptic equation with 
exponential nonlinearities in two-dimensional case. There are few results about logarithmic functions. 
The logarithmic equation \(-\Delta u=u\log^m(1+u)(m>0)\) was recently studied in \cite{GMIQ2015}, 
while the associated parabolic equation
\(	u_t - \Delta u = |u|^{p-1} u \log^m\left(2 + u^2\right) (x\in\mathbb{R}^N, t>0, m\in\mathbb{R})\)
was studied in \cite{HZ2022}. All blow-up solutions in the Sobolev subcritical range are 
Type I solutions. This is the first determination of the blow-up rate for a semilinear heat 
equation where the main nonlinear term is not homogeneous.

In this paper we consider the positive solution of the parabolic equation
\begin{equation*}\label{parabolic log}
	u_t-\Delta u=(1+u) \ln^p(1+u).
\end{equation*}
By the Cole-Hopf type transformation \(v=\ln(1+u)\), the equation is reduced to
\begin{equation*}
	v_t-\Delta v=v^p+|\nabla v|^2.
\end{equation*}
It is formally identical to (\ref{main equation1}). But here $q=2$, it is always supercritical 
with respect to $q$ no matter what value $p$ takes. In this paper we can prove the following 
Liouville theorem for the equation 
\begin{equation}\label{main equation2}
	u_t-\Delta u=u^p+M|\nabla u|^2.
\end{equation}
 
\begin{theorem}\label{main theorem2}
	Let $1<p<p_B$, then there exists $\varepsilon_2(N,p)>0$ such that for any $M\in(0,\varepsilon_2)$, 
    the nonnegative bounded solution of {\rm(\ref{main equation2})} in $\mathbb{R}^N\times \mathbb{R}$ 
    is $u\equiv 0$.
\end{theorem}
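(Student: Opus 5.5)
The plan is to run the integral Bernstein method as in \cite{BV1998, S2023, LZ2025}, now with the quadratic gradient term treated as a perturbation. Since $u$ is bounded, the term $M|\nabla u|^2$ can be absorbed.

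\textbf{Step 1 (change of unknown).} Assume $u\ge 0$ is bounded, $0\le u\le K$, and first reduce to $u>0$ by the strong maximum principle. Set $w=u^{-\beta}$, or equivalently work with $u^{\alpha}$ for a suitable exponent. We then multiply the equation by test functions of the form $u^{a}|\nabla u|^{2}\varphi$ and $u^{a+p}\varphi$. Here $\varphi=\psi(x/R)\eta(t/R^2)$ is a standard space--time cutoff, raised to a large power $m$.

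We derive the integral Bochner-type identity for $\int (\Delta u)^2 u^{a}\varphi$ and $\int u^{a-2}|\nabla u|^4\varphi$ exactly as in the proof of Theorem \ref{main theorem1}. The time derivative terms are handled by the parabolic device of \cite{BV1998}, i.e.\ multiplying by $u_t$-weighted quantities and integrating by parts in $t$.

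The outcome should be an inequality
\[
\int\!\!\int \Big(u^{a-2}|\nabla u|^4+u^{a+2p}+u^{a}u_t^2\Big)\varphi
\le C\!\int\!\!\int (\text{cutoff terms}) + |M|\,\mathcal{E},
\]
valid for $1<p<p_B$ with a positive constant on the left. This is precisely the range where the algebraic quadratic form in $(|\nabla u|^2/u,\ u^p,\ u_t)$ is positive definite for some admissible $a$, and that positivity is what the restriction $p<p_B$ buys.

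\textbf{Step 2 (the perturbation).} The new error $\mathcal{E}$ collects the contributions of $M|\nabla u|^2$. These are integrals of types $u^{a-1}|\nabla u|^{4}\varphi\cdot u$, $u^{a}|\nabla u|^2 \Delta u\,\varphi$, and $u^{a+p}|\nabla u|^2\varphi$, plus cutoff terms.

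Using boundedness, $u\le K$, we get $u^{a-1}|\nabla u|^4\le K\,u^{a-2}|\nabla u|^4$. Cauchy--Schwarz then bounds each term by the main left-hand quantities with a constant $C(N,p)\max(1,K)$ times $|M|$.

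Here is the difficulty: $\varepsilon_2$ must depend only on $N,p$, not on $K$. To remove $K$ I will use scaling. If $u$ solves \eqref{main equation2}, then $u_\lambda(x,t)=\lambda^{2/(p-1)}u(\lambda x,\lambda^2t)$ solves $u_t-\Delta u=u^p+M\lambda^{-2/(p-1)}|\nabla u|^2$. This changes $M$, which is not what we want. Instead I will exploit the structure $M|\nabla u|^2$ with $M>0$ through the Cole--Hopf substitution $v=(e^{Mu}-1)/M$, which gives
\[
v_t-\Delta v=(1+Mv)\Big(\tfrac{\ln(1+Mv)}{M}\Big)^p=:f_M(v).
\]
For $M$ small and $Mv$ bounded, $f_M(v)$ behaves like $v^p$ with $vf_M'(v)/f_M(v)$ uniformly close to $p$. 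The integral Bernstein method of \cite{Q2021,S2023} for nonlinearities that are close to power-like, with $p<p_B$, then applies with constants independent of $K$, provided the needed closeness is measured on the range of $Mv$.

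So I would handle the hard step either by the direct estimate with $K$-dependence, if the argument can be made scale-free using $\sup u$ via a doubling argument, or by the Cole--Hopf route. I expect this uniformity issue to be the main obstacle.

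\textbf{Step 3 (conclusion).} Choose $\varepsilon_2$ so that the perturbation is absorbed into the left-hand side. Young's inequality bounds the cutoff terms by $CR^{N+2-2(a+2p)/(p-1)}$, and the exponent is negative for the chosen $a$ in the range $p<p_B$. Letting $R\to\infty$ forces $\nabla u\equiv0$ and $u^{p}\equiv 0$, hence $u\equiv0$.
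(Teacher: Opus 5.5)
Your direct route is the one the paper takes. It works with $v=u+\theta<\tilde C$ and bounds $M|\nabla\varphi||\nabla v|^3$ using $v^6\le\tilde C^4v^2$. It then absorbs every $M^2\iint\varphi|\nabla v|^4=M^2E$ term through $I\ge\tilde C^{-2}E$, by putting $\mu E$ with $\mu=\alpha/(4\tilde C^2)$ on the left. The signs $\beta<0$, $M>0$ let it drop $-\beta G$.

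The genuine gap is the step you flag and leave open: making $\varepsilon_2$ independent of $K$. Neither repair you suggest can close it.
\begin{itemize}
\item Scaling and doubling. Under $u_\lambda(x,t)=\lambda^{2/(p-1)}u(\lambda x,\lambda^2t)$ the pair $(M,\sup u)$ becomes $(M\lambda^{-2/(p-1)},\lambda^{2/(p-1)}\sup u)$. So $M\sup u$ is scale invariant, and neither rescaling nor a doubling construction can make it small.
\item What a $K$-free threshold would mean. A threshold $\varepsilon_2(N,p)$ free of $K$ would, after rescaling with $\lambda$ large, give the Liouville property for \emph{every} $M>0$. That is out of reach of an argument that treats $M|\nabla u|^2$ as a perturbation.
\item Cole--Hopf. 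This route has the same defect. One has $f_M(v)=M^{-p}g(Mv)$ with $g(s)=(1+s)\ln^p(1+s)$, and $vf_M'(v)/f_M(v)=\frac{s}{1+s}\bigl(1+\frac{p}{\ln(1+s)}\bigr)$ with $s=Mv\in[0,e^{MK}-1]$. This ratio is uniformly close to $p$ only when $MK$ is small; smallness of $M$ alone plays no role.
\end{itemize}

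The paper's proof has exactly this dependence and does not address it. Its absorption condition $C(\varepsilon)M^2\le\mu/2$ with $\mu=\alpha/(4\tilde C^2)$ makes the admissible threshold proportional to $\tilde C^{-1}$. So the claimed $\varepsilon_2(N,p)$ is not justified by its argument.

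What both your direct estimate and the paper's computation actually prove is a scale-invariant statement. There is $c(N,p)>0$ such that every nonnegative bounded solution with $M\sup u\le c(N,p)$ vanishes identically; equivalently, $\varepsilon_2$ depends on $N$, $p$ and $\sup u$. You should state and prove that version with the direct estimate, and drop the scaling, doubling and Cole--Hopf detours.
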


Comparing with Theorem \ref{liouville1}, we need not assume $u$ is time-decreasing. 
And we do not require the assumption of a positive lower bound. However, we need to 
impose the assumption that $u$ possesses an upper bound.

\section{Liouville Theorem and Universal Priori Estimate for {\boldmath $M<0$}}
In this section, we prove Theorems \ref{main theorem1} and \ref{main theorem3}. First, let us recall the following lemma from \cite[Lemma 3.2]{LZ2025}, 
which provides local integral estimates for the solution and its gradient.
\begin{lemma}\label{c1}
    Let \( \Omega \) be an arbitrary domain in \( \mathbb{R}^N \), \( T > 0 \), 
    and \( 0 \leq \varphi \in C_0^\infty(\Omega \times (-T, T)) \). 
    Let \( u \) be a nonnegative solution of {\rm (\ref{main equation1})} in 
    \( \Omega \times (-T, T) \), where $M>0,\ p,q>1$ are parameters. 
    Fix \( \theta > 0 \) and set $v = u + \theta$, $v$ solves the perturbation equation
    \begin{equation}\label{w1}
        v_t-\Delta v=v^p+M|\nabla v|^q+f_\theta,
    \end{equation}
    and $f_\theta = u^p - v^p$. For given \( k \in \mathbb{R} \), denote
    \[ I = \iint \varphi v^{-2} |\nabla v|^4, \quad 
    L = \iint \varphi v^{2p}, \quad 
    G = M \iint \varphi v^{-1} |\nabla v|^{2+q},\]
    where \( \iint \) denotes the integral on \( \Omega \times (-T, T) \). Then there holds
    \begin{align}\label{b1}
        &\alpha I + \delta L - \beta G\nonumber\\
        \leq &C \iint \varphi \left( M^2 |\nabla v|^{2q} + M |v_t| |\nabla v|^q + v^{-1} 
        |v_t| |\nabla v|^2 + v^p |v_t| + M v^p |\nabla v|^q \right)\nonumber\\
        &+ C \iint \varphi v_t^2 + C \iint |\varphi_t| v^{p+1} + C \iint |\Delta \varphi| 
        |\nabla v|^2\nonumber\\
        &+ C \iint |\nabla \varphi| \left( v^{-1} |\nabla v|^3 + \bigl|v_t - v^p - M |
        \nabla v|^q\bigr| \, |\nabla v| + v^p |\nabla v| \right)\nonumber\\
        &+ C \iint \left( \varphi v^{-1} |\nabla v|^2 + \varphi v^p + |\nabla \varphi| 
        |\nabla v| \right) f_\theta + C F_\theta,
    \end{align}
    where $C = C(N, k) > 0$,
    $$F_\theta = \iint \varphi \left[ f_\theta^2 - 2 (v_t - v^p - M |\nabla v|^q) 
    f_\theta \right],$$
    and
    \begin{align*}
        &\alpha = -\frac{N-1}{N} k^2 - k,\\
        &\delta = -\frac{k}{p} \frac{N+2}{N} - \frac{N-1}{N},\\
        &\beta = \frac{N+2}{N} k.
    \end{align*}
\end{lemma}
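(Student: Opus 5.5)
This is the integral Bernstein identity of Bidaut-V\'eron and V\'eron, adapted to the parabolic setting. I would derive it by testing the equation for $v$ against multipliers built from $v^{k}$ and its derivatives. The starting point is to write $v$ as $v=w^{m}$ or, equivalently, to work with $v$ and weight every integral by a power of $v$.

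The first step is to differentiate (\ref{w1}) in $x$ and use the Bochner formula
\[\tfrac12\Delta|\nabla v|^2=|D^2v|^2+\nabla v\cdot\nabla\Delta v .\]
I would multiply this formula by $\varphi v^{k}$, and multiply the equation itself by $\varphi v^{k-1}|\nabla v|^2\,\Delta v$-type expressions, say $\varphi v^{k}\Delta v$ and $\varphi v^{k-1}|\nabla v|^2$. Then I integrate over $\Omega\times(-T,T)$. Each time I substitute $\Delta v=v_t-v^p-M|\nabla v|^q-f_\theta$, which is what produces all the $v_t$, $M|\nabla v|^q$ and $f_\theta$ terms on the right of (\ref{b1}).

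The second step is the algebraic core. I decompose $D^2v$ into its traceless part and $\frac{\Delta v}{N}\,\mathrm{Id}$. The refined inequality
\[|D^2v|^2\ge \frac{(\Delta v)^2}{N}+\frac{N}{N-1}\Bigl(\frac{D^2v(\nabla v,\nabla v)}{|\nabla v|^2}-\frac{\Delta v}{N}\Bigr)^2\]
then lets me eliminate the mixed term $v^{k-1}D^2v(\nabla v,\nabla v)$ in favour of $I$ and of terms involving $\Delta v$. Integrating by parts the terms $v^{k-1}|\nabla v|^2\Delta v$ and $v^{p+k}\Delta v$ turns $v^p\Delta v$ into $-p v^{p-1}|\nabla v|^2$ plus boundary terms carrying $\nabla\varphi$. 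Expanding $(\Delta v)^2$ with the equation gives $L$, and gives $G$ from the cross term $v^{-1}|\nabla v|^2\cdot M|\nabla v|^q$.

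The third step is normalisation. I set the weight exponent so that the three principal quantities $I,L,G$ appear with the coefficients $\alpha,\delta,\beta$ depending on the free parameter $k$. The remaining main-order term $\iint\varphi v^{p-1}|\nabla v|^2$ is then absorbed via Young's inequality into $I$ and $L$. I expect this absorption to be the reason $\delta$ takes the stated form.

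The fourth step treats everything else as error. This covers all terms with $\varphi_t$, $\nabla\varphi$, $\Delta\varphi$ (after integration by parts), all terms containing $v_t$, products with $f_\theta$, and $(\Delta v)^2$ contributions of the form $f_\theta^2-2(v_t-v^p-M|\nabla v|^q)f_\theta$, which are collected into $F_\theta$. I keep these on the right-hand side with a constant $C(N,k)$ and no attempt at absorption. The time-derivative terms come from $\iint\varphi v^k v_t\Delta v$: integrating by parts in $x$ and then in $t$ gives $\varphi_t v^{p+1}$-type and $v_t^2$ terms.

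The main obstacle is the bookkeeping in the second and third steps. The difficulty is getting exactly the coefficients $\alpha=-\frac{N-1}{N}k^2-k$, $\delta$ and $\beta$. This requires choosing the multipliers with the right relative weights, essentially the combination used by Bidaut-V\'eron--V\'eron with parameter $k$, so that the cross terms cancel rather than merely being bounded. Since the statement is quoted from \cite[Lemma 3.2]{LZ2025}, I would follow that computation, checking the sign conventions where $M$ enters.
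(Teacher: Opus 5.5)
The paper does not prove this lemma; it imports it from \cite[Lemma 3.2]{LZ2025}. Your skeleton is the right one: Bochner's formula, a trace inequality for the Hessian, integration by parts of the mixed term $v^{-1}D^2v(\nabla v,\nabla v)$, substitution of $\Delta v=v_t-v^p-M|\nabla v|^q-f_\theta$, and collecting the $f_\theta$-part of $(\Delta v)^2$ into $F_\theta$. However, the step that is supposed to produce $\delta$ fails. After the Hessian step and the substitution of the equation one arrives at
\[
\alpha I-\frac{N-1}{N}L-\frac{N+2}{N}k\,J-\beta G\le(\text{error terms}),\qquad J=\iint\varphi v^{p-1}|\nabla v|^2 .
\]
You propose to absorb $J$ into $I$ and $L$ by Young's inequality, and this cannot work.

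\begin{itemize}
\item Where the lemma is used, $\alpha>0$ forces $k<0$. Then $-\frac{N+2}{N}kJ$ is a \emph{positive} term on the left, so you need a lower bound for $J$ in terms of $L$. Young only bounds $J$ from above.
\item Dropping $J$ leaves $\delta=-\frac{N-1}{N}\le 0$, and Lemma~\ref{g1} collapses.
\item For $k>0$, Young shifts both $\alpha$ and $\delta$ by amounts depending on a splitting parameter, so the stated coefficients cannot come out either.
\end{itemize}

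The missing ingredient is the exact identity obtained by testing \eqref{w1} with $\varphi v^p$:
\[
pJ=L+M\iint\varphi v^p|\nabla v|^q+\iint\varphi v^pf_\theta-\iint\varphi v^pv_t-\iint v^p\,\nabla\varphi\cdot\nabla v .
\]
This identity is the source of the factor $1/p$ in $\delta$. Its remainders are exactly the terms $\varphi v^p|v_t|$, $M\varphi v^p|\nabla v|^q$, $|\nabla\varphi|v^p|\nabla v|$ and $\varphi v^pf_\theta$ appearing in \eqref{b1}. You do integrate $v^p\Delta v$ by parts in your second step, but in the direction that creates $J$ instead of eliminating it.

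A second, smaller problem is where $k$ enters. Since $I,L,G$ carry no factor $v^k$, multiplying Bochner's formula by $\varphi v^k$ produces the wrong quantities. ``Normalising'' the weight exponent to $0$ then removes the free parameter altogether. The test function should be just $\varphi$. The parameter comes from applying $|B|^2\ge\frac1N(\mathrm{tr}\,B)^2$ to $B=D^2v+k\,v^{-1}\nabla v\otimes\nabla v$ (equivalently, working with $v^{k+1}$). Combine this with
$\iint\varphi|D^2v|^2=\iint\varphi(\Delta v)^2+\iint\Delta v\,\nabla\varphi\cdot\nabla v+\frac12\iint|\nabla v|^2\Delta\varphi$
and
$\iint\varphi v^{-1}D^2v(\nabla v,\nabla v)=\frac12 I-\frac12\iint\varphi v^{-1}|\nabla v|^2\Delta v-\frac12\iint v^{-1}|\nabla v|^2\,\nabla\varphi\cdot\nabla v$.
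This yields
\[
\alpha I+\frac{N+2}{N}k\iint\varphi v^{-1}|\nabla v|^2\Delta v-\frac{N-1}{N}\iint\varphi(\Delta v)^2\le(\text{terms with }\nabla\varphi,\ \Delta\varphi),
\]
with exactly $\alpha=-\frac{N-1}{N}k^2-k$. Your refined Hessian inequality gives the same result only after the square is linearised with a multiplier proportional to $v^{-1}|\nabla v|^2$, which you do not mention.

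Finally, the $v_t$-terms need no integration by parts here; they are simply kept from expanding $(\Delta v)^2$ and $v^{-1}|\nabla v|^2\Delta v$. Integrating $\iint\varphi v_t\Delta v$ by parts would create $\iint\varphi_t|\nabla v|^2$, which is not among the error terms of \eqref{b1}. That manipulation belongs to the proof of Lemma~\ref{g1}.
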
 

Since the proof of Lemma \ref{c1} is essentially independent of the sign of $M$, 
Lemma \ref{c1} can be naturally extended to the case $M<0$. 
Accordingly, the estimate \eqref{b1} can be rewritten as
\begin{align}\label{new b1}
&\alpha I + \delta L - \beta G \nonumber\\
&\leqslant C \iint \varphi \left( M^2 |\nabla v|^{2q} 
+ |M| |v_t| |\nabla v|^q + v^{-1} |v_t| |\nabla v|^2 + v^p |v_t| 
+ |M| v^p |\nabla v|^q \right) \nonumber\\
&\quad + C \iint \varphi v_t^2 + C \iint |\varphi_t| v^{p+1} 
+ C \iint |\Delta \varphi| |\nabla v|^2 \nonumber\\
&\quad + C \iint |\nabla \varphi| \left( v^{-1} |\nabla v|^3 + \bigl|v_t - v^p 
- M |\nabla v|^q\bigr| |\nabla v| 
+ v^p |\nabla v| \right) \nonumber\\
&\quad + C \iint \left( \varphi v^{-1} |\nabla v|^2 + \varphi v^p 
+ |\nabla \varphi| |\nabla v| \right) f_\theta + C F_\theta.
\end{align}

Using \eqref{new b1}, we can prove the following integral estimate 
for the solution of (\ref{main equation1}). In contrast to Lemma 3.4 in 
\cite{LZ2025}, the parameter $M$ can be negative in the present work when $q$ is critical.
\begin{lemma}\label{g1}
    Assume \( 1 < p < p_B \), $q=2p/(p+1)$. Then there exists \( \varepsilon_1 > 0 \) 
    depending on \( N \) and \( p \), such that for any \( M \in (-\varepsilon_1, 
    \varepsilon_1) \), the nonnegative solution \( u \) of {\rm(\ref{main equation1})} 
    in \( Q_R := B_R \times (-R^2, R^2) \) satisfies
    \[\int_{-R^2/2}^{R^2/2} \int_{B_{R/2}} u^{2p}(x, t) \, \mathrm{d}x\,\mathrm{d}t 
    \leq C(N, p) R^{-\frac{4p}{p-1} + N + 2}.
    \]
\end{lemma}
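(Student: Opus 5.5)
We will follow the scheme of Lemma 3.4 in \cite{LZ2025}, starting from the inequality \eqref{new b1}, which holds for either sign of $M$. Take a cut-off $\varphi=\xi^m$ with $\xi\in C_0^\infty(Q_R)$, $0\le\xi\le1$, $\xi=1$ on $B_{R/2}\times(-R^2/2,R^2/2)$, $|\nabla\xi|\le CR^{-1}$, $|\Delta\xi|+|\xi_t|\le CR^{-2}$, and $m$ large. The first step is to choose $k$. Since $1<p<p_B$, there is $k<0$ with $\alpha=-\frac{N-1}{N}k^2-k>0$ and $\delta=-\frac{k}{p}\frac{N+2}{N}-\frac{N-1}{N}>0$. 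The admissible $k$ form the interval $-\frac{N}{N-1}<k<-\frac{p(N-1)}{N+2}$, which is nonempty exactly when $p<\frac{N(N+2)}{(N-1)^2}$. Fixing such a $k$ gives $\beta=\frac{N+2}{N}k<0$. Thus for $M>0$ the term $-\beta G\ge0$ is harmless. For $M<0$ we only know $|\beta G|\le|\beta||M|\iint\varphi v^{-1}|\nabla v|^{2+q}$.

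The key observation for $M<0$ is that $q=2p/(p+1)$ is critical, so this term is absorbed by $I$ and $L$. By Young's inequality with exponents $\frac{4}{2+q}$ and $\frac{4}{2-q}$,
\[
v^{-1}|\nabla v|^{2+q}=\bigl(v^{-2}|\nabla v|^4\bigr)^{\frac{2+q}{4}}v^{\frac{q}{2}}\le v^{-2}|\nabla v|^4+v^{\frac{2q}{2-q}}=v^{-2}|\nabla v|^4+v^{2p},
\]
because $\frac{2q}{2-q}=2p$ when $q=\frac{2p}{p+1}$. Hence $|\beta G|\le|\beta||M|(I+L)$. The same Young splitting gives $M^2|\nabla v|^{2q}\le M^2(I+L)$ and $|M|v^p|\nabla v|^q\le C|M|(I+L)$, since the homogeneities match. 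Therefore, for $|M|<\varepsilon_1(N,p)$ small, these terms are absorbed and the left side becomes at least $\frac{\alpha}{2}I+\frac{\delta}{2}L$.

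The remaining right-hand terms are estimated exactly as in \cite{LZ2025}. The $v_t$ terms are handled by a separate energy-type estimate for $\iint\varphi v_t^2$, obtained by testing the equation with $\varphi v_t$ and using $|M|<\varepsilon_1$ to control $|M||\nabla v|^q v_t$. Products such as $v^{-1}|v_t||\nabla v|^2$, $v^p|v_t|$ and $|M||v_t||\nabla v|^q$ are split by Cauchy--Schwarz into $\epsilon(I+L)+C\iint\varphi v_t^2$. The cut-off terms such as $|\nabla\varphi|v^{-1}|\nabla v|^3$, $|\Delta\varphi||\nabla v|^2$ and $|\varphi_t|v^{p+1}$ are bounded by Young's inequality, using $|\nabla\varphi|\le CR^{-1}\varphi^{(m-1)/m}$, by $\epsilon(I+L)+CR^{N+2-\frac{4p}{p-1}}$. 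The exponent follows from scaling: $u\sim R^{-2/(p-1)}$, $\nabla\sim R^{-1}$, and volume $R^{N+2}$. Finally let $\theta\to0$. The terms involving $f_\theta$ and $F_\theta$ tend to $0$ locally, since $|f_\theta|\le C\theta(u+\theta)^{p-1}$ and $u$ is smooth on $\operatorname{supp}\varphi$. Since $v\to u$, this yields $\iint_{Q_{R/2}}u^{2p}\le CR^{-\frac{4p}{p-1}+N+2}$.

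The main obstacle is closing the estimate for $\iint\varphi v_t^2$ with constants independent of the sign of $M$. The relevant terms are $|M||v_t||\nabla v|^q$ and the boundary term $|\nabla\varphi|\,|v_t-v^p-M|\nabla v|^q|\,|\nabla v|$. I would first try the route of \cite{LZ2025}: express $v_t$ through the equation as $\Delta v+v^p+M|\nabla v|^q$. The estimate is then reduced to bounds on $\iint\varphi|\Delta v|^2$, which are controlled by $I$, $L$ and cut-off terms via integration by parts. The smallness of $|M|$ is then used once more so that every $M$-dependent contribution is absorbed, regardless of its sign.
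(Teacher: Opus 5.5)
Your set-up matches the paper's: the interval $-\frac{N}{N-1}<k<-\frac{p(N-1)}{N+2}$, the cut-off $\xi^m$, and especially the absorption of $-\beta G$, $M^2|\nabla v|^{2q}$ and $|M|v^p|\nabla v|^q$ into $I+L$ by Young's inequality with $\frac{2q}{2-q}=2p$. That absorption is the only genuinely new point for $M<0$. The gap is in the step you yourself call the main obstacle.

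The quantity $\iint\varphi v_t^2$ enters the main inequality multiplied by a \emph{fixed} constant. It comes from splittings such as $\varphi v^{-1}|\nabla v|^2|v_t|\le\frac{\alpha}{4C}\varphi v^{-2}|\nabla v|^4+\frac{C}{\alpha}\varphi v_t^2$ and $\varphi v^p|v_t|\le\frac{\delta}{4C}\varphi v^{2p}+\frac{C}{\delta}\varphi v_t^2$. So you need
\[
\iint\varphi v_t^2\le\varepsilon(I+L)+C(\varepsilon)|M|^{4/q}I+C(\varepsilon)\,(\text{cut-off terms})+C\iint\varphi f_\theta^2
\]
with $\varepsilon$ as small as you like. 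Routing this through $\iint\varphi|\Delta v|^2$ cannot give that smallness. Even granting $\iint\varphi|\Delta v|^2\le C(I+L)+\dots$, the constant is of order one, and it must be. For a $t$-independent solution one has $v_t\equiv0$, while $-\Delta v=u^p+M|\nabla u|^q$, so $\iint\varphi|\Delta v|^2$ is of the same size as $L$. Any bound of the form $\iint\varphi v_t^2\lesssim\iint\varphi|\Delta v|^2+L+\cdots$ therefore leaves an $O(1)$ multiple of $I+L$, which cannot be absorbed into $\frac34(\alpha I+\delta L)$.

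What closes the argument, and what the paper does, is to test with $\varphi v_t$ and \emph{integrate by parts instead of estimating}. Since $\varphi v_tv^p=\frac{1}{p+1}\varphi\partial_tv^{p+1}$ and $\varphi\nabla v_t\cdot\nabla v=\frac12\varphi\partial_t|\nabla v|^2$, one gets
\[
\iint\varphi v_t^2=-\frac{1}{p+1}\iint\varphi_tv^{p+1}+\frac12\iint\varphi_t|\nabla v|^2-\iint v_t\nabla\varphi\cdot\nabla v+M\iint\varphi v_t|\nabla v|^q+\iint\varphi v_tf_\theta .
\]
Every term on the right carries $\varphi_t$, $\nabla\varphi$, a factor $M$, or $f_\theta$. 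First absorb fractions of $\iint\varphi v_t^2$ by Cauchy--Schwarz. The cut-off terms are then bounded by $\varepsilon(I+L)+C(\varepsilon)R^{N+2-\frac{4p}{p-1}}$, exactly like the other cut-off terms. The $M$-term gives $M^2\iint\varphi|\nabla v|^{2q}\le\varepsilon L+C(\varepsilon)|M|^{4/q}I$. One then fixes $\varepsilon$ first and $\varepsilon_1$ afterwards.

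Your third paragraph starts from the right test function. But it is this exact-derivative structure, not a bound on $\Delta v$, that makes the $v_t$ contribution negligible.

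Note also that the sign of $M$ plays no role here. Nor does it matter in the boundary term $|\nabla\varphi|\,|v_t-v^p-M|\nabla v|^q|\,|\nabla v|$. The paper simply splits that term by the triangle inequality and uses $|M||\nabla\varphi||\nabla v|^{q+1}\le C|M||\nabla\varphi|(v^{-1}|\nabla v|^3+v^p|\nabla v|)$, again by criticality of $q$. The only sign-sensitive term is $-\beta G$, which you handled correctly.
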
 
\begin{proof}[\textbf{Proof}]
    First, we study the coefficients and test function. 
    Choose \( k < 0 \) and \( k \neq -1 \) such that \( \alpha, \delta > 0 \), 
    which is equivalent to
    \[\frac{p(N-1)}{N+2} < -k < \frac{N}{N-1}.\] When $N=1$, such a $k$ always exists. 
    When $N>1$, to ensure that such a $k$ exists, we need $$p < \frac{N(N+2)}{(N-1)^2},$$ 
    i.e., $p<p_B$. We estimate the right-hand side of \eqref{new b1} by choosing an appropriate test function 
    \( \varphi \). Let \( \xi \in C_0^\infty(B_1 \times (-1, 1)) \), such that \( \xi = 1 \) 
    in \( B_{1/2} \times (-1/2, 1/2) \) and \( 0 \leq \xi \leq 1 \). 
    We can fix \( \bar{\alpha} > 0 \) such that \[\frac{3p + 1}{4p} < \bar{\alpha} < 1.\]
    Fix \( R > 1 \), by taking \( \varphi(x, t) = \varphi_R(x, t) = 
    \xi^b(R^{-1}x, R^{-2}t) \) with \( b = b(\bar{\alpha}) > 2 \) sufficiently large, 
    then we can obtain 
    \begin{equation}\label{n1}
        |\nabla \varphi| \leq CR^{-1} \varphi^{\bar{\alpha}}, \quad |\Delta \varphi| 
        + \varphi^{-1}|\nabla \varphi|^2 + |\partial_t \varphi| \leq CR^{-2} 
        \varphi^{\bar{\alpha}}.   
    \end{equation}
    
    Next, we proceed to integral estimates. For convenience, we denote the integral over 
    \( Q_R \) simply by \( \iint \). Notice that $q=2p/(p+1)$, which implies
    \begin{align*}
            |\beta G|=&|\beta||M|\iint\varphi v^{-1}|\nabla v|^{2+q}\\
            \leq &\varepsilon\iint\varphi\left( v^{-\frac{2+q}{2}}|\nabla v|^{2+q}\right)^
            {\frac{4}{2+q}}+C(\varepsilon)\iint\varphi\left( |\beta||M|v^{\frac{q}{2}}\right)
            ^{\frac{4}{2-q}}\\ =&\varepsilon\iint\varphi v^{-2}|\nabla v|^4+C(\varepsilon)M
            ^{\frac{4}{2-q}}\iint\varphi v^{2p}\\
            =&\varepsilon I+C(\varepsilon)M^{\frac{4}{2-q}}L.
    \end{align*}
    Together with this, \eqref{new b1} reduces to    
    \begin{align}\label{o1}
            &\alpha I + \delta L\nonumber\\
            \leq&\,\varepsilon I+C(\varepsilon)M^{\frac{4}{2-q}}L\nonumber\\
            &+C \iint \varphi \left( M^2 |\nabla v|^{2q} + |M| |v_t| |\nabla v|^q + v^{-1} 
            |v_t| |\nabla v|^2 + v^p |v_t| + |M| v^p |\nabla v|^q \right)\nonumber\\
            &+ C \iint \varphi v_t^2 + C \iint |\varphi_t| v^{p+1} + C \iint 
            |\Delta \varphi| |\nabla v|^2\nonumber\\
            &+ C \iint |\nabla \varphi| \left( v^{-1} |\nabla v|^3 + \bigl|v_t - v^p - M |
            \nabla v|^q\bigr| \, |\nabla v| + v^p |\nabla v| \right)\nonumber\\
            &+ C \iint \left( \varphi v^{-1} |\nabla v|^2 + \varphi v^p + |\nabla \varphi| 
            |\nabla v| \right) f_\theta + C F_\theta.
    \end{align}
    Now we estimate the right side of (\ref{o1}). 
            Fix \( \varepsilon > 0 \), we first estimate the terms involving \( \varphi \). 
            By Young's inequality and \( q = 2p/(p+1) \), it follows that
    \begin{align}\label{j1}
            M^2 \varphi |\nabla v|^{2q} &\leq \varepsilon \varphi v^{2p} + C(\varepsilon) 
            M^{\frac{4}{q}} \varphi v^{\frac{2(q-2)p}{q}} |\nabla v|^4\nonumber\\
            &= \varepsilon \varphi v^{2p} + C(\varepsilon) M^{\frac{4}{q}} \varphi v^{-2} 
            |\nabla v|^4.
    \end{align}
    Meanwhile, we obtain
    \begin{align*}
        |M| \varphi |v_t| |\nabla v|^q &\leq \varphi v_t^2 + C M^2 \varphi |\nabla v|^{2q},\\
        \varphi v^{-1} |\nabla v|^2 |v_t| &\leq \frac{\alpha}{4C} \varphi v^{-2} |\nabla v|^4 
        + \frac{C}{\alpha} \varphi v_t^2,\\
        \varphi v^p |v_t| &\leq \frac{\delta}{4C} \varphi v^{2p} + \frac{C}{\delta} 
        \varphi v_t^2,\\
        |M| \varphi v^p |\nabla v|^q &\leq \varepsilon \varphi v^{2p} + C(\varepsilon) 
        \varphi M^2 |\nabla v|^{2q},
        \end{align*}
    where $C$ is the same constant as that in the first term on the right-hand side of 
    \eqref{new b1}. We next estimate the terms involving \( |\nabla \varphi| \), yielding
    \begin{align*}
            |\nabla \varphi| v^{-1} |\nabla v|^3 &\leq \varepsilon \left( 
            \varphi^{\frac{3}{4}} v^{-\frac{3}{2}} |\nabla v|^3 \right)^{\frac{4}{3}} 
            + C(\varepsilon) \left( v^{\frac{1}{2}} \varphi^{-\frac{3}{4}} 
            |\nabla \varphi| \right)^4\\
            &= \varepsilon \varphi v^{-2} |\nabla v|^4 + C(\varepsilon) v^2 \varphi^{-3} 
            |\nabla \varphi|^4\\
            &\leq \varepsilon \varphi v^{-2} |\nabla v|^4 + \varepsilon \left( 
            \varphi^{\frac{1}{p}} v^2 \right)^p + C(\varepsilon) \left( 
            \varphi^{-\frac{3p+1}{p}} |\nabla \varphi|^4 \right)^{\frac{p}{p-1}}\\
            &= \varepsilon \varphi v^{-2} |\nabla v|^4 + \varepsilon \varphi v^{2p} 
            + C(\varepsilon) \left( \varphi^{-\frac{3p+1}{4p}} |\nabla \varphi| \right)
            ^{\frac{4p}{p-1}},
    \end{align*}
    \begin{align*}
            |\nabla \varphi| |v_t| |\nabla v| &\leq \left( \varphi^{\frac{1}{2}} |v_t| 
            \right)^2 + C \left( \varphi^{-\frac{1}{2}} |\nabla \varphi| |\nabla v| 
            \right)^2\\
            &= \varphi v_t^2 + C \varphi^{-1} |\nabla \varphi|^2 |\nabla v|^2\\
            &\leq \varphi v_t^2 + \varepsilon \left( \varphi^{\frac{1}{2}}v^{-1} 
            |\nabla v|^2 \right)^2 + C(\varepsilon) \left( v |\nabla \varphi|^2 
            \varphi^{-\frac{3}{2}} \right)^2\\
            &= \varphi v_t^2 + \varepsilon \varphi v^{-2} |\nabla v|^4 + C(\varepsilon) 
            v^2 \varphi^{-3} |\nabla \varphi|^4\\
            &\leq \varphi v_t^2 + \varepsilon \varphi v^{-2} |\nabla v|^4 + \varepsilon 
            \varphi v^{2p} + C(\varepsilon) \left( \varphi^{-\frac{3p+1}{4p}} 
            |\nabla \varphi| \right)^{\frac{4p}{p-1}},
    \end{align*}
    and
    \begin{align*}
            |\nabla \varphi| v^p |\nabla v| &\leq \varepsilon \left( \varphi^{\frac{1}{4}} 
            v^{-\frac{1}{2}} |\nabla v| \right)^4 + C(\varepsilon) \left( \varphi^{-\frac{1}
            {4}} v^{p+\frac{1}{2}} |\nabla \varphi| \right)^{\frac{4}{3}}\\
            &= \varepsilon \varphi v^{-2} |\nabla v|^4 + C(\varepsilon) \varphi
            ^{-\frac{1}{3}} |\nabla \varphi|^{\frac{4}{3}} v^{\frac{4p+2}{3}}\\
            &\leq \varepsilon \varphi v^{-2} |\nabla v|^4 + \varepsilon \left( 
                \varphi^{\frac{2p+1}{3p}} v^{\frac{4p+2}{3}} \right)^{\frac{3p}{2p+1}} 
                + C(\varepsilon) \left( \varphi^{-\frac{3p+1}{3p}} |\nabla \varphi|^{\frac{4}{3}} \right)^{\frac{3p}{p-1}}\\
            &= \varepsilon \varphi v^{-2} |\nabla v|^4 + \varepsilon \varphi v^{2p} 
            + C(\varepsilon) \left( \varphi^{-\frac{3p+1}{4p}} |\nabla \varphi| \right)
            ^{\frac{4p}{p-1}}.
    \end{align*}
    Again in view of \( q = 2p/(p+1) \), it follows that
    \begin{align*}
            |M||\nabla \varphi||\nabla v|^{q+1} &\leq C |M| |\nabla \varphi||\nabla v| 
            \left[ \left( v^{-\frac{q}{2}}|\nabla v|^q \right)^{\frac{2}{q}} 
            + v^{\frac{q}{2-q}} \right]\\
            &= C |M| |\nabla \varphi| v^{-1} |\nabla v|^3 + C |M| |\nabla \varphi| v^p 
            |\nabla v|.
    \end{align*}
    We now estimate the terms involving \( |\varphi_t| \) and \( |\Delta \varphi| \). 
    Once more, via Young's inequality, we deduce
    \begin{align}\label{vp}
            |\varphi_t| v^{p+1} &\leq \varepsilon \left( \varphi^{\frac{p+1}{2p}} 
            v^{p+1} \right)^{\frac{2p}{p+1}} + C(\varepsilon) \left( \varphi^{-\frac{p+1}
            {2p}} |\varphi_t| \right)^{\frac{2p}{p-1}}\nonumber\\
            &= \varepsilon \varphi v^{2p} + C(\varepsilon) \left( \varphi^{-\frac{p+1}{2p}} 
            |\varphi_t| \right)^{\frac{2p}{p-1}}
    \end{align}
    and
    \begin{align*}
            |\Delta \varphi||\nabla v|^2 &\leq \varepsilon \left( \varphi^{\frac{1}{2}} 
            v^{-1} |\nabla v|^2 \right)^2 + C(\varepsilon) \left( \varphi^{-\frac{1}{2}} 
            v |\Delta \varphi| \right)^2\\
            &= \varepsilon \varphi v^{-2} |\nabla v|^4 + C(\varepsilon) v^2 \varphi^{-1} 
            |\Delta \varphi|^2\\
            &\leq \varepsilon \varphi v^{-2} |\nabla v|^4 + \varepsilon \left( v^2 \varphi
            ^{\frac{1}{p}} \right)^p + C(\varepsilon) \left( \varphi^{-\frac{p+1}{p}} 
            |\Delta \varphi|^2 \right)^{\frac{p}{p-1}}\\
            &= \varepsilon \varphi v^{-2} |\nabla v|^4 + \varepsilon \varphi v^{2p} 
            + C(\varepsilon) \left( \varphi^{-\frac{p+1}{2p}} |\Delta \varphi| \right)
            ^{\frac{2p}{p-1}}.
    \end{align*}
    It remains to estimate the terms involving \( f_\theta \) and \( F_\theta \). We obtain
    \begin{align*}
            \iint \varphi v^{-1} |\nabla v|^2 f_\theta &\leq \varepsilon \iint \varphi v^{-2} 
            |\nabla v|^4 + C(\varepsilon) \iint \varphi f_\theta^2,\\
            \iint \varphi v^p f_\theta &\leq \varepsilon \iint \varphi v^{2p} + C(\varepsilon) 
            \iint \varphi f_\theta^2,
    \end{align*}
    and
    \begin{align*}
            &\iint |\nabla \varphi| |\nabla v| f_\theta\\
            &\leq \varepsilon \iint \left( \varphi^{\frac{1}{4}} v^{-\frac{1}{2}} 
            |\nabla v| \right)^4 + C(\varepsilon) \iint \left( v^{\frac{1}{2}} 
            \varphi^{-\frac{1}{4}} |\nabla \varphi| f_\theta \right)^{\frac{4}{3}}\\
            &\leq \varepsilon \iint \varphi v^{-2} |\nabla v|^4 + \varepsilon \iint 
            \left( \varphi^{\frac{1}{3p}} v^{\frac{2}{3}} \right)^{3p} + C(\varepsilon) 
            \iint \left( \varphi^{-\frac{p+1}{3p}} |\nabla \varphi|^{\frac{4}{3}} f_
            \theta^{\frac{4}{3}} \right)^{\frac{3p}{3p-1}}\\
            &= \varepsilon \iint \varphi \left( v^{-2} |\nabla v|^4 + v^{2p} \right) 
            + C(\varepsilon) \iint \varphi^{-\frac{p+1}{3p-1}} |\nabla \varphi|^{\frac
            {4p}{3p-1}} f_\theta^{\frac{4p}{3p-1}}\\
            &\leq \varepsilon \iint \varphi \left( v^{-2} |\nabla v|^4 + v^{2p} \right) 
            + C(\varepsilon) \iint \left( \varphi^{-\frac{3p+1}{4p}} |\nabla \varphi| 
            \right)^{\frac{4p}{p-1}} + C(\varepsilon) \iint \varphi f_\theta^2.
    \end{align*}
    From the definition of \( F_\theta \), we obtain
    $$ F_\theta \leq \iint \varphi \left( v_t^2 + \varepsilon v^{2p} 
    + \varepsilon M^2 |\nabla v|^{2q} + C(\varepsilon) f_\theta^2 \right).$$
    Hence, inserting the above estimates into (\ref{o1}), we get
    \begin{align*}
            &\alpha I + \delta L\\
            \leq& \frac{\alpha}{4}I+\frac{\delta}{4}L+C \left( \varepsilon I 
            + \varepsilon L + C(\varepsilon) M^{\frac{4}{q}} I 
            +C(\varepsilon)M^{\frac{4}{2-q}}L+ \iint \varphi v_t^2 \right) + C(\varepsilon) 
            \iint \varphi f_\theta^2\\
            &+ C(\varepsilon) \left[ \iint \left( \varphi^{-\frac{3p+1}{4p}} 
            |\nabla \varphi| \right)^{\frac{4p}{p-1}} + \iint \left( \varphi^
            {-\frac{p+1}{2p}} |\Delta \varphi| \right)^{\frac{2p}{p-1}} 
            + \iint \left( \varphi^{-\frac{p+1}{2p}} |\varphi_t| \right)^
            {\frac{2p}{p-1}} \right].
    \end{align*}
    Equivalently,
     \begin{align}\label{i1}
            &\frac{3}{4}\alpha I + \frac{3}{4}\delta L\nonumber\\
            \leq& C \left( \varepsilon I + \varepsilon L + C(\varepsilon) M^{\frac{4}{q}} I
            +C(\varepsilon)M^{\frac{4}{2-q}}L+ \iint \varphi v_t^2 \right) + C(\varepsilon) 
            \iint \varphi f_\theta^2\nonumber\\
            &+ C(\varepsilon) \left[ \iint \left( \varphi^{-\frac{3p+1}{4p}} 
            |\nabla \varphi| \right)^{\frac{4p}{p-1}} + \iint \left( \varphi^{
            -\frac{p+1}{2p}} |\Delta \varphi| \right)^{\frac{2p}{p-1}} 
            + \iint \left( \varphi^{-\frac{p+1}{2p}} |\varphi_t| \right)
            ^{\frac{2p}{p-1}} \right].
    \end{align}
    Finally, we need to estimate the term involving the time derivative of \( v \) 
    in (\ref{i1}). From (\ref{w1}), we get
    \begin{align}\label{k1}
           & \iint \varphi v_t^2\nonumber\\
            =& \iint \varphi v_t \left( v^p + M |\nabla v|^q + \Delta v + f_\theta \right)\nonumber\\
            =& \frac{1}{p+1} \iint \varphi \left( v^{p+1} \right)_t + M \iint \varphi v_t 
            |\nabla v|^q + \iint \varphi v_t \Delta v + \iint \varphi v_t f_\theta\nonumber\\
            \leq& \frac{1}{p+1} \iint |\varphi_t| v^{p+1} + \frac{1}{2} \iint \left( \varphi 
            v_t^2 + M^2 \varphi |\nabla v|^{2q} \right)\nonumber\\
            &- \iint \left( \nabla \varphi v_t + \varphi \nabla v_t \right) \cdot \nabla v 
            + \frac{1}{4}\iint \varphi v_t^2+\iint \varphi f_\theta^2\nonumber\\
            \leq& C(p) \iint |\varphi_t| v^{p+1} + \frac{3}{4} \iint  \varphi v_t^2 + 
            \frac{1}{2}\iint M^2 \varphi |\nabla v|^{2q} - \iint \varphi \partial_t 
            \left( \frac{|\nabla v|^2}{2} \right)\nonumber\\
            &+ \varepsilon \iint \varphi v^{-2} |\nabla v|^4 + \varepsilon \iint \varphi 
            v^{2p} + C(\varepsilon) \iint \left( \varphi^{-\frac{3p+1}{4p}} |\nabla \varphi| 
            \right)^{\frac{4p}{p-1}} + \iint \varphi f_\theta^2.
    \end{align}
    Integrating by parts and using Young's inequality, we have
    \begin{align}\label{d1}
            -\iint \varphi \partial_t \left( \frac{|\nabla v|^2}{2} \right) &= 
            \frac{1}{2} \iint \varphi_t |\nabla v|^2\nonumber\\
            &\leq \varepsilon \iint \varphi v^{-2} |\nabla v|^4 + \varepsilon 
            \iint \varphi v^{2p} + C(\varepsilon) \iint \left( \varphi^{-\frac{p+1}{2p}} 
            |\varphi_t| \right)^{\frac{2p}{p-1}}.
    \end{align}
    Inserting (\ref{j1}), (\ref{vp}) and (\ref{d1}) into (\ref{k1}) yields
    \begin{align}\label{l1}
            \iint \varphi v_t^2 \leq &C \left( \varepsilon I + \varepsilon L 
            + C(\varepsilon) M^{\frac{4}{q}} I \right)\nonumber\\
            &+ C(\varepsilon) \iint \left[ \left( \varphi^{-\frac{3p+1}{4p}} 
            |\nabla \varphi| \right)^{\frac{4p}{p-1}} + \left( \varphi^{-\frac{p+1}{2p}} 
            |\varphi_t| \right)^{\frac{2p}{p-1}} \right] + C \iint \varphi f_\theta^2.
    \end{align}
    Combining (\ref{l1}) and (\ref{i1}), we obtain
    \begin{align*}
            &\frac{3}{4}\alpha I +\frac{3}{4} \delta L\\
            \leq &C \left( \varepsilon I + \varepsilon L + C(\varepsilon) M^{\frac{4}{q}} I 
            +C(\varepsilon)M^{\frac{4}{2-q}}L \right) + C(\varepsilon) \iint 
            \varphi f_\theta^2\\
            &+ C(\varepsilon) \iint \left[ \left( \varphi^{-\frac{3p+1}{4p}} 
            |\nabla \varphi| \right)^{\frac{4p}{p-1}} + \left( \varphi^{-\frac{p+1}{2p}} 
            |\Delta \varphi| \right)^{\frac{2p}{p-1}} + \left( \varphi^{-\frac{p+1}{2p}} 
            |\varphi_t| \right)^{\frac{2p}{p-1}} \right].
    \end{align*}
    We select \( \varepsilon = \varepsilon(N, p) > 0 \) sufficiently small such that 
    $C\varepsilon<\min\{\frac{\alpha}{4},\frac{\delta}{4}\}$. And there exists 
    \( \varepsilon_1(N, p) > 0 \) such that, for \( -\varepsilon_1 < M < \varepsilon_1 \),
    \[ C(\varepsilon) M^{\frac{4}{q}} = C(N, p) M^{\frac{4}{q}} \leq \frac{\alpha}{4}.\]
    \[C(\varepsilon) M^{\frac{4}{2-q}} = C(N, p) M^{\frac{4}{2-q}} \leq \frac{\delta}{4}.\]
    Together with (\ref{n1}), and using the fact \( \bar{\alpha} > (3p+1)/(4p) \), we obtain
    \[\frac{\alpha}{4} I + \frac{\delta}{4} L \leq C(N, p) R^{-\frac{4p}{p-1} + N + 2} 
    + C(\varepsilon) \iint \varphi f_\theta^2.\]
    Using the fact that 
    \[\int_{-R^2/2}^{R^2/2} \int_{B_{R/2}} \varphi v^{2p}(x, t) \, {\rm d}x \, {\rm d}t 
    \to \int_{-R^2/2}^{R^2/2} \int_{B_{R/2}} u^{2p}(x, t) \, {\rm d}x \, {\rm d}t \quad 
    \text{as } \theta \to 0,\]
    then Lemma \ref{g1} follows as $\theta\rightarrow 0$.
\end{proof}

\begin{proof}[\textbf{The proof of Theorem \ref{main theorem1}.}]
            According to Lemma \ref{g1},
            \[\int_{-R^2/2}^{R^2/2} \int_{B_{R/2}} u^{2p}(x, t) \, {\rm d}x \, {\rm d}t 
            \leq C(N, p) R^{-\frac{4p}{p-1} + N+2}.\]
            Since \( p < p_B \leq p_S \), letting \( R \to \infty \), we conclude that 
            \( u \equiv 0 \).
\end{proof}
    
To establish Theorem\ref{main theorem3}, we first recall the following key doubling lemma, 
which is due to Pol\'{a}\v{c}ik, Quittner and Souplet \cite[Lemma 5.1]{PQS12007}.
\begin{lemma}\label{lem3}
    Let \( (\mathcal{X}, d) \) be a complete metric space with metric \( d \), 
    and let \( \emptyset \neq D \subset \Sigma \subset \mathcal{X} \), 
    with \( \Sigma \) closed. Set \( \Gamma = \Sigma \setminus D \). 
    Let \( M : D \to (0, \infty) \) be bounded on compact subsets of \( D \) 
    and fix a real number \( k > 0 \). If \( y \in D \) is such that
    $$M(y) \, {\rm dist}(y, \Gamma) > 2k,$$
    then there exists \( x \in D \) such that
    $$M(x) \, {\rm dist}(x, \Gamma) > 2k, \quad M(x) \geq M(y),$$
    and
    $$M(z) \leq 2M(x) \quad \text{for all } z \in D \cap \overline{B}_{\mathcal{X}}(x, kM^{-1}(x)).$$
\end{lemma}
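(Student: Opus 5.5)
The plan is a contradiction argument: build a sequence in $D$ along which $M$ at least doubles at each step, show that it is Cauchy, and use that its limit lies in $D$ to contradict local boundedness of $M$. Suppose that no $x\in D$ satisfies all three conclusions. Set $x_0=y$; it satisfies the first two properties, namely $M(x_0)\,{\rm dist}(x_0,\Gamma)>2k$ and $M(x_0)\ge M(y)$.

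\textbf{Inductive step.} Suppose $x_j\in D$ satisfies
$$M(x_j)\,{\rm dist}(x_j,\Gamma)>2k \quad\text{and}\quad M(x_j)\ge M(y).$$
By our assumption, the third property fails at $x_j$. So there is $x_{j+1}\in D$ with
$$d(x_{j+1},x_j)\le kM^{-1}(x_j) \quad\text{and}\quad M(x_{j+1})>2M(x_j).$$
The first two properties pass to $x_{j+1}$. Indeed, $M(x_{j+1})>2M(x_j)\ge M(y)$. By the triangle inequality,
$${\rm dist}(x_{j+1},\Gamma)\ge {\rm dist}(x_j,\Gamma)-kM^{-1}(x_j)>2kM^{-1}(x_j)-kM^{-1}(x_j)=kM^{-1}(x_j).$$
Therefore
$$M(x_{j+1})\,{\rm dist}(x_{j+1},\Gamma)>2M(x_j)\cdot kM^{-1}(x_j)=2k.$$
This produces an infinite sequence $(x_j)\subset D$ with $M(x_j)\ge 2^jM(y)$, and the inequality is strict for $j\ge1$.

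\textbf{Convergence.} The steps satisfy $d(x_{j+1},x_j)\le k2^{-j}M^{-1}(y)$, so the series of steps converges and $(x_j)$ is Cauchy. By completeness of $\mathcal X$ it converges to some $a$, and $a\in\Sigma$ because $\Sigma$ is closed. Moreover,
$$d(a,y)\le \sum_{j\ge0}kM^{-1}(x_j)<2kM^{-1}(y)<{\rm dist}(y,\Gamma),$$
where the strict inequality uses $M(x_j)>2^jM(y)$ for $j\ge1$. The last inequality is the hypothesis $M(y)\,{\rm dist}(y,\Gamma)>2k$. Hence ${\rm dist}(a,\Gamma)>0$, so $a\notin\Gamma$, and thus $a\in\Sigma\setminus\Gamma=D$.

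\textbf{Contradiction.} The set $K=\{x_j: j\ge0\}\cup\{a\}$ is compact and contained in $D$. Since $M$ is bounded on compact subsets of $D$, it is bounded on $K$. This contradicts $M(x_j)\ge 2^jM(y)\to\infty$, so some $x\in D$ satisfies all three conclusions.

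\textbf{Main obstacle.} The only delicate point is showing that the limit $a$ stays in $D$ rather than landing on $\Gamma$. This is exactly what the factor $2$ in $2k$ is for: the total travel distance is strictly less than $2kM^{-1}(y)$, which is strictly less than ${\rm dist}(y,\Gamma)$.
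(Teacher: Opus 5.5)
Your proof is correct and complete. It is essentially the standard argument: a doubling sequence with $d(x_{j+1},x_j)\le kM^{-1}(x_j)$ and $M(x_{j+1})>2M(x_j)$, Cauchy by geometric decay, the factor $2$ in $2k$ keeping the limit in $D$, and a contradiction with boundedness of $M$ on the compact set $\{x_j\}\cup\{a\}$. The paper itself gives no proof of this lemma and only cites it from Pol\'{a}\v{c}ik, Quittner and Souplet, whose original proof runs the same way.
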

\begin{proof}[\textbf{The proof of Theorem \ref{main theorem3}.}]
    Using the parabolic distance
    $$d_P((x, t), (y, s)) := |x - y| + |t - s|^{\frac{1}{2}},$$ 
    the estimates \eqref{p1} is written as a more concise form in \cite{PQS2007}. 
    Namely, denote \( D = \Omega \times (0, T) \), with the topological boundary 
    \(\partial D = (\partial \Omega \times [0, T]) \cup (\overline{\Omega} \times \{0, T\}) \) 
    in \( \mathbb{R}^{N+1} \), and \( d_P((x, t), \partial D) = \inf_{(y,s) 
    \in \partial D} d_P((x, t), (y, s)) \). We will prove the more precise estimates
        \begin{equation}\label{q1}
            u(x, t) + |\nabla u(x, t)|^{\frac{2}{p+1}} \leq C d_P^{-\frac{2}{p-1}}((x, t), \partial D) 
            \quad \text{in } D
        \end{equation}
    instead of (\ref{p1}).
    Set$$M(u) = u^{\frac{p-1}{2}} + |\nabla u|^{\frac{p-1}{p+1}},$$
    then (\ref{q1}) can be written as
        \begin{equation*}
            M(u(x, t)) \leq C d_P^{-1}((x, t), \partial D), \quad (x, t) \in D.
        \end{equation*}
    Assume this estimate fails, then there exist sequences of domains \( \Omega_k \),  
    times \( T_k > 0 \), solutions \( u_k \) of (\ref{main equation1}) in \( D_k := \Omega_k 
    \times (0, T_k) \) and points \( (y_k, \tau_k) \in D_k \) such that
        \begin{equation*}
            M_k(y_k, \tau_k) := M(u_k(y_k, \tau_k)) > 2kd_P^{-1}((y_k, \tau_k), \partial D_k).
        \end{equation*}
    We use Lemma \ref{lem3} with \( \mathcal{X} = \mathbb{R}^{N+1} \), equipped with 
    the parabolic distance \( d_P \), \( \Sigma = \Sigma_k = \overline{D}_k \) and \( \Gamma = 
    \partial D_k \). Then there exist points \( (x_k, t_k) \in D_k \), such that
        \begin{equation}\label{r1}
            M_k(x_k, t_k) > 2k d_P^{-1}((x_k, t_k), \partial D_k), \quad M_k(x_k, t_k) 
            \geq M_k(y_k, \tau_k), 
        \end{equation}
    and
    \begin{equation}\label{u1}
        M_k(x, t) \leq 2M_k(x_k, t_k) \quad \text{in } \hat{D}_k,
    \end{equation}
    where $\hat{D}_k := \left\{(x,t)\in \mathbb{R}^{N+1}: d_P((x, t), (x_k, t_k))
    \leq kM_k^{-1}(x_k, t_k) \right\}.$
    By the first inequality in (\ref{r1}), we obtain
    $$d_P((x, t), (x_k, t_k)) < \frac{1}{2}d_P((x_k, t_k), \partial D_k), \quad (x, t) 
    \in \hat{D}_k,$$
    which implies \( \hat{D}_k \subset D_k \). Set
    \begin{equation}\label{s1}
        \lambda_k = M_k^{-1}(x_k, t_k),
    \end{equation}
    and rescale \( u_k \) by setting
    \begin{equation}\label{t1}
        v_k(y, s) = \lambda_k^{\frac{2}{p-1}} u_k \left( x_k + \lambda_k y, t_k 
        + \lambda_k^2 s \right), 
        \quad (y, s) \in \tilde{D}_k,
    \end{equation}
    where
    $$\tilde{D}_k := \left\{ y \in \mathbb{R}^N: \ |y| < \frac{k}{2} \right\} \times 
    \left( -\frac{k^2}{4}, \frac{k^2}{4} \right).$$
    The function \( v_k \) solves
    $$\partial_s v_k - \Delta v_k = v_k^p + M|\nabla v_k|^{\frac{2p}{p+1}} \quad 
    \text{in } \tilde{D}_k.$$
    By (\ref{s1}) and (\ref{t1}), we have
    $$\left[ v_k^{\frac{p-1}{2}} + |\nabla v_k|^{\frac{p-1}{p+1}} \right](0, 0) 
    = \lambda_k M_k(x_k, t_k) = 1.$$
    By (\ref{u1}), we also have
    $$\left[ v_k^{\frac{p-1}{2}} + |\nabla v_k|^{\frac{p-1}{p+1}} \right](y, s) \leq 2, 
    \quad (y, s) \in \tilde{D}_k.$$
    Using the parabolic \( L^p \) estimates and embedding theorem, up to a subsequence, 
    there exists a \( v \in W_{\text{loc}}^{2,1;r}(\mathbb{R}^N\times\mathbb{R}) \) such that
    $$v_k \to v \quad \text{in } C_{\text{loc}}^{2,1}(\mathbb{R}^N \times \mathbb{R})$$
    for any \( 1 < r < \infty \). Notice that $\left[ v_k^{(p-1)/2} + |\nabla v_k|^{(p-1)/(p+1)} 
    \right](0, 0) = 1$, hence \( v \) is nontrivial.
    Therefore, under the assumptions of Theorem \ref{main theorem3}, 
    \( v \) is a nonnegative nontrivial solution of equation (\ref{main equation1}) in 
    \( \mathbb{R}^N \times \mathbb{R} \) with \( -\varepsilon_1 < M < \varepsilon_1 \). 
    This contradicts Theorem \ref{main theorem1}, and thus Theorem \ref{main theorem3} is proved.
\end{proof}

\section{Liouville Theorem for {\boldmath $q=2$}}
In this section, we prove Theorem \ref{main theorem2}.
\begin{proof}[\textbf{The proof of Theorem \ref{main theorem2}.}]
    Since the choice of coefficients and test function is the same as that in Lemma \ref{g1}, 
    we proceed directly to the integral estimates.
            
    Choose $q=2$ in Lemma \ref{c1}, (\ref{b1}) becomes the following form
    \begin{align}\label{v1}
        &\alpha I + \delta L - \beta G\nonumber\\
        \leq &C \iint \varphi \left( M^2 |\nabla v|^4 + M |v_t| |\nabla v|^2 + v^{-1} |v_t| |\nabla v|^2 + v^p |v_t| + M v^p |\nabla v|^2 \right)\nonumber\\
        &+ C \iint \varphi v_t^2 + C \iint |\varphi_t| v^{p+1} + C \iint |\Delta \varphi| |\nabla v|^2\nonumber\\
        &+ C \iint |\nabla \varphi| \left( v^{-1} |\nabla v|^3 + \bigl|v_t - v^p - M |\nabla v|^2\bigr| \, |\nabla v| + v^p |\nabla v| \right)\nonumber\\
        &+ C \iint \left( \varphi v^{-1} |\nabla v|^2 + \varphi v^p + |\nabla \varphi| |\nabla v| \right) f_\theta + C F_\theta,
    \end{align}
    where $C = C(N, k) > 0$,\ $v$ satisfies
    \begin{equation}\label{eq2}
             v_t-\Delta v=v^p+M|\nabla v|^2+f_\theta,
    \end{equation}
    and $$F_\theta = \iint \varphi \left[ f_\theta^2 - 2 (v_t - v^p - M |\nabla v|^2) f_\theta \right].$$
    Now we estimate the right side of (\ref{v1}) as follows. Since the integral estimates  
    are essentially the same as those in the proof of Lemma \ref{g1}, 
    the estimates for the same terms are omitted, and we only provide details 
    for the terms that are different.  

    Fix \( \varepsilon > 0 \), we first estimate the terms involving \( \varphi \). Denote
    $$E=\iint \varphi|\nabla v|^4,$$
    then
    \begin{equation}\label{me}
            \iint M^2 \varphi |\nabla v|^4=M^2 E.
    \end{equation}
    Using Young's inequality, we obtain
    $$M\varphi |v_t||\nabla v|^2\leq \varphi v_t^2+CM^2\varphi |\nabla v|^4,$$
    and
    $$M\varphi v^p|\nabla v|^2\leq \varepsilon\varphi v^{2p}+C(\varepsilon)M^2\varphi|\nabla v|^4.$$
    We next estimate the terms involving \( |\nabla \varphi| \). By the boundedness assumption on $u$, 
    we have $0<v<\tilde{C}$. Hence 
    \begin{align*}
             M|\nabla\varphi||\nabla v|^3&\leq\varepsilon\left(\varphi^{\frac{3}{4}}v^{-\frac{3}{2}}|\nabla v|^3\right)^{\frac{4}{3}}+C(\varepsilon)\left(M\varphi^{-\frac{3}{4}}|\nabla\varphi|v^{\frac{3}{2}}\right)^4\\
             &=\varepsilon\varphi v^{-2}|\nabla v|^4+C(\varepsilon)M^4 v^6\varphi^{-3}|\nabla \varphi|^4 \\
             &\leq\varepsilon\varphi v^{-2}|\nabla v|^4+ C(\varepsilon)\tilde{C}^4 M^4 v^2\varphi^{-3}|\nabla \varphi|^4 \\
             &\leq\varepsilon\varphi v^{-2}|\nabla v|^4+\varepsilon\left(\varphi^{\frac{1}{p}}v^2\right)^p+C(\varepsilon)\left(\tilde{C}^4 M^4 \varphi^{-\frac{3p+1}{p}}|\nabla \varphi|^4 \right)^{\frac{p}{p-1}}\\
             &=\varepsilon\varphi v^{-2}|\nabla v|^4+\varepsilon\varphi v^{2p}+C(\varepsilon)\left(\tilde{C} M \varphi^{-\frac{3p+1}{4p}}|\nabla \varphi| \right)^{\frac{4p}{p-1}}
    \end{align*}
    It remains to estimate the terms involving \( F_\theta \). From the definition of \( F_\theta \), we find
    $$F_\theta \leq \iint \varphi \left( v_t^2 + \varepsilon v^{2p} + \varepsilon M^2 |\nabla v|^4 + C(\varepsilon) f_\theta^2 \right).$$
    Substituting these estimates together with those from the proof of Lemma \ref{g1} into (\ref{v1}), we have
    \begin{align*}
            &\frac{3}{4}\alpha I+\frac{3}{4}\delta L-\beta G\nonumber\\
            \leq & C(\varepsilon I+\varepsilon L+C(\varepsilon)M^2E)+C\iint\varphi v_t^2+C(\varepsilon)\iint\varphi f_\theta^2\\ &+C(\varepsilon)\iint\left[\left(\varphi^{-\frac{3p+1}{4p}}|\nabla \varphi|\right)^{\frac{4p}{p-1}}+\left(\varphi^{-\frac{p+1}{2p}}|\Delta\varphi|\right)^{\frac{2p}{p-1}}+\left(\varphi^{-\frac{p+1}{2p}}|\varphi_t|\right)^{\frac{2p}{p-1}}\right].
    \end{align*}
    Notice $0<v<\tilde{C}$, then
    $$I=\iint v^{-2}\varphi|\nabla v|^4\geq \tilde{C}^{-2}\iint \varphi |\nabla v|^4=\tilde{C}^{-2}E.$$
    Set $\mu=\frac{\alpha}{4\tilde{C}^2}$, we derive
    \begin{align}\label{f1}
            &\mu E+\frac{\alpha}{2} I+\frac{3}{4}\delta L-\beta G\nonumber\\
            \leq& \frac{3}{4}\alpha I+\frac{3}{4}\delta L-\beta G\nonumber\\
            \leq & C\varepsilon I+C\varepsilon L+C(\varepsilon)M^2E+C\iint\varphi v_t^2+C(\varepsilon)\iint\varphi f_\theta^2\nonumber\\ &+C(\varepsilon)\iint\left[\left(\varphi^{-\frac{3p+1}{4p}}|\nabla \varphi|\right)^{\frac{4p}{p-1}}+\left(\varphi^{-\frac{p+1}{2p}}|\Delta\varphi|\right)^{\frac{2p}{p-1}}+\left(\varphi^{-\frac{p+1}{2p}}|\varphi_t|\right)^{\frac{2p}{p-1}}\right].
    \end{align}
    Finally, we estimate the time derivative term of \( v \). From (\ref{eq2}), we get
    \begin{align}\label{vt}
            \iint& \varphi v_t^2\nonumber\\
            =& \iint \varphi v_t \left( v^p + M |\nabla v|^2 + \Delta v + f_\theta \right)\nonumber\\
            =& \frac{1}{p+1} \iint \varphi \left( v^{p+1} \right)_t + M \iint \varphi v_t |\nabla v|^2 + \iint \varphi v_t \Delta v + \iint \varphi v_t f_\theta\nonumber\\
            \leq& \frac{1}{p+1} \iint |\varphi_t| v^{p+1} + \frac{1}{2} \iint \left( \varphi v_t^2 + M^2 \varphi |\nabla v|^4 \right)\nonumber\\
            &- \iint \left( \nabla \varphi v_t + \varphi \nabla v_t \right) \cdot \nabla v + \frac{1}{4}\iint \varphi v_t^2+\iint\varphi f_\theta^2\nonumber\\
            \leq& C(p) \iint |\varphi_t| v^{p+1} + \frac{3}{4} \iint \varphi v_t^2 + \frac{1}{2}\iint M^2 \varphi |\nabla v|^4 - \iint \varphi \partial_t \left( \frac{|\nabla v|^2}{2} \right)\nonumber\\
            &+ \varepsilon \iint \varphi v^{-2} |\nabla v|^4 + \varepsilon \iint \varphi v^{2p} + C(\varepsilon) \iint \left( \varphi^{-\frac{3p+1}{4p}} |\nabla \varphi| \right)^{\frac{4p}{p-1}} + \iint \varphi f_\theta^2.
    \end{align}
    Substituting (\ref{me}), (\ref{vp}) and (\ref{d1}) into (\ref{vt}), it follows that
    \begin{align}\label{e1}
            \iint \varphi v_t^2 \leq & C \left( \varepsilon I + \varepsilon L + C(\varepsilon) M^2 E\right)\nonumber\\
            &+ C(\varepsilon) \iint \left[ \left( \varphi^{-\frac{3p+1}{4p}} |\nabla \varphi| \right)^{\frac{4p}{p-1}} + \left( \varphi^{-\frac{p+1}{2p}} |\varphi_t| \right)^{\frac{2p}{p-1}} \right] + C \iint \varphi f_\theta^2.
    \end{align}
    Therefore, from (\ref{f1}) and (\ref{e1}), we have
    \begin{align}
            \mu E&+\frac{\alpha}{2} I + \frac{3}{4}\delta L - \beta G\nonumber\\
            \leq & C \left( \varepsilon I + \varepsilon L + C(\varepsilon) M^2 E \right) + C(\varepsilon) \iint \varphi f_\theta^2\nonumber\\
            &+ C(\varepsilon) \iint \left[ \left( \varphi^{-\frac{3p+1}{4p}} |\nabla \varphi| \right)^{\frac{4p}{p-1}} + \left( \varphi^{-\frac{p+1}{2p}} |\Delta \varphi| \right)^{\frac{2p}{p-1}} + \left( \varphi^{-\frac{p+1}{2p}} |\varphi_t| \right)^{\frac{2p}{p-1}} \right].
    \end{align}
    We select $\varepsilon=\varepsilon(N,p)>0$ sufficiently small such that $C\varepsilon<\min\{\frac{\alpha}{4},\frac{\delta}{2}\}$. And there exists $\varepsilon_2(N,p)>0$ such that, for $0<M< \varepsilon_2$,
    $$C(\varepsilon)M^2=C(N,p)M^2\leq\frac{\mu}{2}.$$
    Together with (\ref{n1}), and using the fact $\bar{\alpha}>\frac{3p+1}{4p}$, we obtain
    $$ \frac{\mu}{2} E+\frac{\alpha}{4}I+\frac{\delta}{4} L-\beta G \leq C(N,p)R^{-\frac{4p}{p-1}+N+2}+C(\varepsilon)\iint\varphi f_\theta^2.$$
    Using the fact that $\beta<0$ and
    $$\int_{-R^2/2}^{R^2/2}\int_{B_{R/2}}\varphi v^{2p}(x,t){\rm d}x{\rm d}t\rightarrow \int_{-R^2/2}^{R^2/2}\int_{B_{R/2}}u^{2p}(x,t){\rm d}x{\rm d}t\ \ {\rm as}\ \theta\rightarrow  0,$$
    we derive
    \[\int_{-R^2/2}^{R^2/2} \int_{B_{R/2}} u^{2p}(x, t) \, {\rm d}x \, {\rm d}t \leq C(N, p) R^{-\frac{4p}{p-1} + N + 2} .\]
    Since $p<p_B\leq p_S$, letting $R\to \infty$, we conclude that $u\equiv 0$.
\end{proof}

\section*{Acknowledgments}
The author thanks Prof. Xiaoli Han and Prof. Liang Zhao for their patient guidance and insightful suggestions.

\end{document}